\newcommand{\bbC}{\mathbb{C}}
\newcommand{\bbN}{\mathbb{N}}
\newcommand{\bbR}{\mathbb{R}}
\newcommand{\calL}{\mathcal{L}}
\DeclareMathOperator{\id}{id} 
\DeclareMathOperator{\re}{Re} 
\newcommand{\argument}{\mathord{\,\cdot \,}} 
\newcommand{\dx}{\mathrm{d}} 
\newcommand{\norm}[1]{\left\lVert #1 \right\rVert} 
\newcommand{\modulus}[1]{\left\lvert #1 \right\rvert} 
\DeclareMathOperator{\trace}{tr}
\newcommand{\topInt}[1]{\operatorname{int}\left(#1\right)}
\newcommand{\conv}{\operatorname{conv}}
\newcommand{\spec}{\sigma} 
\newcommand{\spb}{s} 
\newcommand{\rightProof}{``$\Rightarrow$''\ }
\newcommand{\leftProof}{``$\Leftarrow$''\ }
\newcommand{\subsetProof}{``$\subseteq$\ ''}
\newcommand{\supsetProof}{``$\supseteq$\ ''}
\newenvironment{psmallmatrix}
	{%
		\left(
		\begin{smallmatrix}
	}%
	{%
		\end{smallmatrix}
		\right)
	}
\theoremstyle{definition}
\newtheorem{definition}{Definition}[section]
\newtheorem{remark}[definition]{Remark}
\newtheorem{example}[definition]{Example}
\theoremstyle{plain}
\newtheorem{proposition}[definition]{Proposition}
\newtheorem{lemma}[definition]{Lemma}
\newtheorem{theorem}[definition]{Theorem}
\numberwithin{equation}{section}
\begin{document}

\title{Eventual cone invariance revisited}
\author[Jochen Glück]{Jochen Glück \orcidlink{0000-0002-0319-6913}}
\address[Jochen Glück]{%
	Bergische Universit\"at Wuppertal,
	Fakult\"at f\"ur Mathematik und Naturwissenschaften,
	Gaußstr.\ 20,
	42119 Wuppertal,
	Germany
}
\email{glueck@uni-wuppertal.de}

\author[Julian Hölz]{Julian Hölz \orcidlink{0000-0001-5058-9210}}
\address[Julian Hölz]{%
	Bergische Universit\"at Wuppertal,
	Fakult\"at f\"ur Mathematik und Naturwissenschaften,
	Gaußstr.\ 20,
	42119 Wuppertal,
	Germany
}
\email{hoelz@uni-wuppertal.de}

\subjclass[2020]{15B48; 15A16; 15A18; 46B40; 47B65}
\keywords{Eventual nonnegativity; eventual positivity; cones; ordered vector space;
	matrix exponential function; matrix semigroup; Perron--Frobenius theorem; Krein--Rutman theorem}

\date{\today}

\begin{abstract}
	We consider finite-dimensional real vector spaces $X$
	ordered by a closed cone $X_+$ with non-empty interior
	and study eventual nonnegativity of matrix semigroups $(e^{tA})_{t \ge 0}$ with respect to this cone.

	Our first contribution is the observation that, for general cones, 
	one needs to distinguish between different notions of eventual nonnegativity:
	(i) uniform eventual nonnegativity means that $e^{tA}$ maps $X_+$ into $X_+$ for all sufficiently large times $t$;
	(ii) individual eventual nonnegativity means that
	for each $x \in X_+$ the vector $e^{tA}x$ is in $X_+$ for all $t$ larger than an $x$-dependent time $t_0$;
	and (iii) weak eventual nonnegativity means that for each $x \in X_+$ and each functional $x'$ in the dual cone $X'_+$
	the value $\langle x', e^{tA} x \rangle$ is in $[0,\infty)$ for all $t$ larger than an $x$- and $x'$-dependent time $t_0$.
	Until now, only the first of these notions has been studied in the literature.
	We demonstrate by examples that, somewhat surprisingly for finite-dimensional spaces, all three notions are different.

	Our second contribution is to show that typical Perron--Frobenius like properties remain valid under the weakest
	of the above notions.

	Third, we study a strengthened form of the above mentioned concepts, namely eventual positivity.
	We prove that the uniform, individual and weak versions of this property are
	-- in contrast to the nonnegative case -- equivalent,
	and that they can be characterized by spectral properties.
\end{abstract}

\maketitle

\section{Introduction and main concepts}

Let $X$ be a finite-dimensional real vector space,
ordered by a closed (and convex) cone $X_+$ with non-empty interior
(see the beginning of Section~\ref{sec:cones-finite-dim-spaces-operator-spaces} for details).
If $A: X \to X$ is linear and the one-parameter semigroup ${(e^{tA})}_{t \geq 0}$ on $X$ leaves $X_+$ invariant,
then this semigroup is said to be \emph{nonnegative}.
This property has been studied for a long time; 
it can be characterized in terms of \emph{cross-positivity} of $A$ \cite[Theorem~3 on p.\;512]{SchneiderVidyasagar1970}
and is related to stability of linear ODEs \cite[Theorem~1.4 on p.\;68]{Stern1982}.

A more subtle phenomenon is the following:
it might happen that the operators $e^{tA}$ leave $X_+$ invariant for all sufficiently
larges times $t$, but not necessarily for small $t$.
This \emph{eventual nonnegativity} or \emph{eventual cone invariance} was first investigated
for matrix powers (rather than matrix exponential functions)
in the case where $X_+$ is the standard cone in $\bbR^d$;
see for instance~\cite{Friedland1978, JohnsonTaragaza2004, McDonaldPaparellaTsatsomeros2014, ZaslavskyMcDonald2003} for just a small sample of the literature on this topic.
For matrix exponentials rather than matrix powers, eventual nonnegativity
was analyzed in~\cite{NoutsosTsatsomeros2008}.
Eventual nonnegativity with respect to general cones --
which is the topic of the present article --
was studied by Kasigwa and Tsatsomeros in~\cite{KasigwaTsatsomeros2017}
and by Sootla in~\cite{Sootla2019}.

\subsection*{Individual vs.\ uniform behaviour}

Precisely speaking, there are at least two canonical ways how one could define
eventual nonnegativity of ${(e^{tA})}_{t \geq 0}$:
one could require that there exists a $t_0 \ge 0$ such that $e^{tA} X_+ \subseteq X_+$ for all $t \ge t_0$;
or one could require that, for each $x \in X_+$, there exists $t_0 \ge 0$ such that
$e^{tA}x \in X_+$ for all $t \ge 0$.
We call the first property \emph{uniform} since $t_0$ does not depend on the initial value $x$;
this type of eventual nonnegativity was studied in \cite{KasigwaTsatsomeros2017}.
The second property is an \emph{individual} one since $t_0$ is allowed to depend on $x$.

The motivation of the present article is the observation that both notions do not coincide, in general
(Example~\ref{exa:uniform-vs-individual}).
It might come as a surprise that this distinction is necessary in finite dimensions 
and this suggests a more thorough investigation of this and related phenomena in the rest of the article.

\subsection*{Versions of eventual nonnegativity and eventual positivity}

Let us first collect the two types of eventual nonnegativity that we mentioned above,
along with a third and even weaker notion, in the following definition.
For undefined notation and terminology we refer
to the beginning of Section~\ref{sec:cones-finite-dim-spaces-operator-spaces}.

\begin{definition}[Eventual nonnegativity]
	\label{def:eventual-nonnegativity}
	Let $X$ be a finite-dimensional real vector space, ordered by a closed cone $X_+$ with non-empty interior.
	Let $A: X \to X$ be linear.
	The semigroup $(e^{tA})_{t \ge 0}$ is called \dots
	\begin{enumerate}[\upshape (a)]
		\item
		      \emph{uniformly eventually nonnegative} if there exists $t_0 \ge 0$ such that
		      $e^{tA} X_+ \subseteq X_+$ for each $t \ge t_0$.

		\item
		      \emph{individually eventually nonnegative} if for each $x \in X_+$ there exists $t_0 \ge 0$ such that
		      $e^{tA} x \in X_+$ for all $t \ge t_0$.

		\item
		      \emph{weakly eventually nonnegative} if for each $x \in X_+$ and each $x' \in X'_+$
		      there exists $t_0 \ge 0$ such that $\langle x', e^{tA} x \rangle \ge 0$ for each $t \ge t_0$.
	\end{enumerate}
\end{definition}

In addition to those eventual nonnegativity concepts,
it is natural to also define similar eventual positivity notions.

\begin{definition}[Eventual positivity]
	\label{def:eventual-positivity}
	Let $X$ be a finite-dimensional real vector space, ordered by a closed cone $X_+$ with non-empty interior.
	Let $A \in X \to X$ be linear.
	The semigroup $(e^{tA})_{t \ge 0}$ is called \dots
	\begin{enumerate}[\upshape (a)]
		\item
		      \emph{uniformly eventually positive} if there exists $t_0 \ge 0$ such that
		      $e^{tA}x \in \topInt{X_+}$ for each $0 \not= x \in X_+$ and each $t \ge t_0$.

		\item
		      \emph{individually eventually positive} if for each $0 \not= x \in X_+$ there exists $t_0 \ge 0$ such that
		      $e^{tA} x \in \topInt{X_+}$ for all $t \ge t_0$.

		\item
		      \emph{weakly eventually positive} if for each $0 \not= x \in X_+$ and each $0 \not= x' \in X'_+$
		      there exists $t_0 \ge 0$ such that $\langle x', e^{tA} x \rangle > 0$ for each $t \ge t_0$.
	\end{enumerate}
\end{definition}

Recall that the cone $X_+$ is called \emph{polyhedral} if it is an intersection
of finitely many closed half spaces in $X$.
A cone is polyhedral if and only if it is the nonnegative span of a finite number of vectors,
see~\cite[Theorem 3.37 on p.\;140]{AliprantisTourky2007}.
Moreover, the dual cone of a polyhedral cone with non-empty interior is also polyhedral~\cite[Theorem 3.25 on p.\;132]{AliprantisTourky2007}.
This easily implies that the three concepts introduced in Definition~\ref{def:eventual-nonnegativity}
are equivalent if $X_+$ is polyhedral.
Hence, cones that are not polyhedral are the most interesting case throughout the paper.

Before we continue, a brief terminological remark is in order.

\begin{remark}
	There are different naming conventions when working with operators that leave a cone invariant:

	Our usage of the notions \emph{nonnegative} and \emph{positive} follows the terminology 
	that is commonly used in finite-dimensional Perron--Frobenius theory, 
	where a vector $x$ that satisfies $x \ge 0$ is called \emph{nonnegative}.
	On the other hand, in the literature on (infinite-dimensional) ordered vector spaces and vector lattices, 
	\emph{positive} typically refers to the property that is called \emph{nonnegative} 
	in the present article 
	(and notions such as \emph{strongly positive} or \emph{strictly positive} are used 
	for what we refer to as \emph{positive}).
\end{remark}

\subsection*{Further related literature}

During the last decade the theory of eventually nonnegative matrix powers
and matrix semigroups was
complemented by a corresponding theory on infinite-dimensional function spaces,
starting with the article \cite{DanersGlueckKennedy2016}. 
In this infinite-dimensional setting it was shown in \cite[Examples~5.7 and~5.8]{DanersGlueckKennedy2016} 
that one has to distinguish between individual and uniform eventual behaviour
(but those examples are very different from the finite-dimensional examples 
that we present in Section~\ref{sec:counterexamples}).
The infinite-dimensional theory has applications to the study of parabolic partial differential equations,
as can e.g.\ be seen in \cite[Section~3.2]{AddonaGregorioRhandiTacelli2022}, \cite[Section~5]{Arora2022}, and \cite[Section~7]{DenkKunzePloss2021}.
For a recent overview of the current state of research in this field we refer to \cite{Glueck2022}.

\subsection*{Organization of the article}

In Section~\ref{sec:cones-finite-dim-spaces-operator-spaces} we give a sufficient condition for a cone in a finite-dimensional space to be closed and we analyze the cone of positive operators.
Section~\ref{sec:counterexamples} gives finite-dimensional counterexamples that show that individual eventual nonnegativity does not imply uniform eventual nonnegativity and that weak eventual nonnegativity does not imply individual eventual nonnegativity.
In Section~\ref{sec:spectral-properties-weakly-eventually-nonnegative-case} we prove that under the weakest eventually nonnegative notion a Perron--Frobenius type result holds, which yields that the spectral bound $\spb(A)$ is an eigenvalue corresponding to a nonnegative eigenvector.
We conclude in Section~\ref{sec:eventual-positivity} by showing that uniform, individual and weak eventual positivity coincide for one-parameter semigroups on finite-dimensional ordered vector spaces.
In the appendix we discuss a few observations about the closedness of cones in finite dimensions which complement
a result in Section~\ref{sec:cones-finite-dim-spaces-operator-spaces}.

\section{Properties of cones in finite dimensions}
\label{sec:cones-finite-dim-spaces-operator-spaces}

\subsection*{Notation and terminology}

Let $X$ be a finite-dimensional real vector space, ordered by a closed cone $X_+$ with non-empty interior.%
\footnote{
	Throughout we endow all finite dimensional real vector spaces $X$ with the topology 
	that is induced by any norm. 
	We often tactily endow $X$ with a norm, 
	the specific choice of which does not matter. 
	
} 
A subset $X_+ \subseteq X$ is called a \emph{wedge} if it is non-empty and satisfies $X_+ + X_+ \subseteq X_+$ and $\lambda X_+ \subseteq X_+$ for all $\lambda \geq 0$. If, in addition, $X_+$ is \emph{pointed}, meaning that $X_+ \cap(-X_+) = \{0\}$, then $X_+$ is called a \emph{cone}. A wedge $X_+$ is called \emph{generating} if $X_+ - X_+ = X$.

Let $X_+ \subseteq X$ be a wedge. 
Then the subset
\begin{align*}
	X_+' := \{ x' \in X' \mid  \langle x', x \rangle \geq 0 \text{ for all } x \in X_+  \}
\end{align*}
of the dual space $X'$ is called the \emph{dual wedge} of $X_+$.
As $X$ is finite-dimensional it follows that the wedge $X_+$ is generating if and only if 
$X'_+$ is a cone \cite[Theorem~2.13(2) on p.\;71]{AliprantisTourky2007}. 
If $X_+$ is closed, then the dual result is also true: 
$X_+$ is a cone if and only if the dual wedge is generating; 
the implication ``$\Rightarrow$'' follows e.g.\ from \cite[Theorem~2.13(2) on p.\;71]{AliprantisTourky2007}, 
and the converse implication follows easily from the definitions.
It is well-known (and not difficult to show) that a wedge $X_+$ is generating 
if and only if the topological interior $\topInt{X_+}$ of $X_+$ is non-empty, see, e.g.,~\cite[Lemma~3.2 on p.\;119]{AliprantisTourky2007}.

\subsection*{A condition for the closedness of a cone}

The following sufficient condition for the closedness of cones generated by dilation invariant sets 
will be useful for the proof of Proposition~\ref{prop:operator-dual-cone}.
We discuss the limitations of Lemma~\ref{lem:closed-convexification} in the appendix.

\begin{lemma}
	\label{lem:closed-convexification}
	Let $X$ be a finite-dimensional real vector space and
	let $E \subseteq X$ be a closed set that satisfies $\lambda E \subseteq E$
	for all numbers $\lambda \geq 0$.
	Assume that $\conv(E) \cap -\conv(E) = \{0\}$ (or more generally that $\conv(E) \cap -E = \{0\}$).
	Then $\conv(E)$ is closed.
\end{lemma}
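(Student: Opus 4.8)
We have a closed set $E$ in a finite-dimensional space, invariant under nonnegative dilations, and we want $\conv(E)$ closed under the assumption $\conv(E) \cap (-E) = \{0\}$. The natural tool is Carathéodory's theorem: since $\dim X = d$, every point of $\conv(E)$ is a convex combination of at most $d+1$ points of $E$. But there's a subtlety — a priori we need to write elements as $\sum_{i=0}^{d} \lambda_i e_i$ with $\lambda_i \ge 0$, $\sum \lambda_i = 1$, $e_i \in E$, and then take limits; the coefficients $\lambda_i$ stay in the compact simplex, but the $e_i$ can run off to infinity, so closedness does not follow directly. This is the main obstacle.

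**Reducing to a cone via dilation invariance.**

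The key observation is that because $\lambda E \subseteq E$ for all $\lambda \ge 0$, the set $\conv(E)$ is itself a wedge: it is convex by definition, and it is closed under nonnegative scalar multiplication since $\lambda \conv(E) = \conv(\lambda E) \subseteq \conv(E)$. Hence instead of Carathéodory for convex hulls we can use the conical version: every nonzero element of $\conv(E)$ (which equals the conical hull of $E$, as $E$ is dilation-invariant and contains $0$) can be written as $\sum_{i=1}^{d} e_i$ with $e_i \in E$, absorbing the coefficients into the $e_i$ themselves. So fix a sequence $x_n \to x$ with $x_n = \sum_{i=1}^{d} e_i^{(n)}$, $e_i^{(n)} \in E$. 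We must produce a representation of $x$ of the same form.

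**Handling the possibly unbounded summands.**

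Normalize: let $s_n := \sum_{i=1}^d \norm{e_i^{(n)}}$. If $\limsup s_n < \infty$, the summands are bounded, pass to a subsequence so that each $e_i^{(n)} \to e_i \in E$ (using closedness of $E$), and conclude $x = \sum e_i \in \conv(E)$. The problematic case is $s_n \to \infty$ along a subsequence. Then consider $y_n := x_n / s_n = \sum_i e_i^{(n)}/s_n$; the rescaled summands $e_i^{(n)}/s_n$ still lie in $E$ (dilation invariance) and have norms summing to $1$, so after passing to a subsequence each converges to some $f_i \in E$ with $\sum_i \norm{f_i} = 1$, in particular not all $f_i$ are zero. But $y_n = x_n/s_n \to x \cdot 0 = 0$ (since $x_n \to x$ is bounded and $s_n \to \infty$), so $\sum_{i=1}^d f_i = 0$. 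Now I would extract the contradiction with the hypothesis: pick an index $j$ with $f_j \ne 0$; then $f_j = -\sum_{i \ne j} f_i$, where the right-hand side lies in $\conv(E)$ (it is a sum of elements of $E$, all in the wedge $\conv(E)$) — wait, I need $f_j = -(\text{element of } \conv(E))$, i.e. $-f_j \in \conv(E)$, while $f_j \in E$; equivalently $f_j \in \conv(E) \cap (-\conv(E))$, and more precisely $f_j \in (-\conv(E)) \cap E$, so $-f_j \in \conv(E) \cap (-E)$. By hypothesis $\conv(E) \cap (-E) = \{0\}$, forcing $f_j = 0$, a contradiction. Hence $s_n$ is bounded after all, and the first case applies. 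This completes the argument; I should also remark at the start that $0 \in E$ (take $\lambda = 0$) so the empty-sum / trivial cases are covered, and note that the parenthetical stronger hypothesis $\conv(E)\cap(-E) = \{0\}$ is exactly what the contradiction step consumes, which is why it suffices.
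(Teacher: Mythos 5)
Your proof is correct and takes essentially the same route as the paper's: both use Carathéodory's theorem combined with dilation invariance to represent each element of $\conv(E)$ as a sum of a fixed number of elements of $E$, then rule out unbounded representing tuples by rescaling, extracting a limit tuple in $E$ that sums to zero, and applying the hypothesis $\conv(E) \cap (-E) = \{0\}$ to one nonzero component. The only cosmetic differences are your use of the conical Carathéodory theorem ($d$ summands rather than $d+1$ with absorbed coefficients) and normalizing by the sum of the component norms instead of a norm on $X^d$.
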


\begin{proof}
	Set $d := \dim(X)+1$.
	By Caratheodory's theorem, see \cite[Theorem IV.17.1 on p.\;155]{Rockafellar1997}, 
	each vector in $\conv(E)$ is a convex combination of at most $d$ vectors from $E$. 
	Together with the assumption $[0,\infty)E \subseteq E$ this implies that the linear map
	\begin{align*}
		T : \, X^d \to X, 
		\quad
		y \mapsto y_1 + \dots + y_d
	\end{align*}
	maps $E^d$ surjectively to $\conv(E)$.
	Now consider a sequence ${(x^{(n)})}_{n \in \bbN}$ in $\conv(E)$ that converges to a point $x \in X$ 
	and choose a sequence $(y^{(n)})_{n \in \bbN}$ in $E^d$ such that $Ty^{(n)} = x^{(n)}$ for each $n \in \bbN$.
	We only need to show that the sequence $(y^{(n)})_{n \in \bbN}$ is bounded in $X^d$; 
	then it has a subsequence that converges to a point $y \in E^d$ (due to the closedness of $E$) 
	and hence $x = Ty \in \conv(E)$.
	
	To prove that $(y^{(n)})_{n \in \bbN}$ is indeed bounded, assume the contrary 
	and fix an arbitrary norm on the finite-dimensional vector space $X^d$ for the following argument.
	After replacing $(y^{(n)})_{n \in \bbN}$ 
	-- and, accordingly, $(x^{(n)})_{n \in \bbN}$ -- 
	with a subsequence we can achieve that $0 < \norm{y^{(n)}} \to \infty$ 
	and that $(y^{(n)} / \norm{y^{(n)}} )_{n \in \bbN}$ converges to a vector $y \in X^d$. 
	Since $[0,\infty)E \subseteq E$ and $E$ is closed, one has $y \in E^d$.
	Moreover, $\norm{y} = 1$, so at least one component of $y$, say $y_1$, is non-zero. 
	On the other hand, $Ty = \lim_n x_n / \norm{y_n} = 0$, so it follows that 
	\begin{align*}
		-y_1 = y_2 + \dots + y_d \in \conv E \cap -E = \{0\},
	\end{align*}
	which is a contradiction.
\end{proof}

\subsection*{Cones in finite-dimensional operator spaces}

Let $X$ be a finite-dimensional real vector space, ordered by a closed cone $X_+$ with non-empty interior.
Denote the space of all linear mappings from $X$ to $X$ by $\calL(X)$. 
We endow $\calL(X)$ with the usual operator norm that is induced by a fixed norm on $X$. 

A mapping $T \in \calL(X)$ is called \emph{nonnegative} if $TX_+ \subseteq X_+$.
Let us denote the set of those mappings by $\calL(X)_+$.
Clearly, $\calL(X)_+$ is closed in $\calL(X)$ and if the cone $X_+$ is generating in $X$
one can readily check that $\calL(X)_+$ is itself a cone.
In the following we characterize the interior of this cone (Proposition~\ref{prop:operator-cone-interior})
and we describe its dual cone (Proposition~\ref{prop:operator-dual-cone}).

To this end we need the following notation.
For every $x \in X$ and $x' \in X'$ we define the operator $x \otimes x' \in \calL(X)$ by $(x \otimes x') y := \langle x', y \rangle x$ for all $y \in X$.
We note that $x \otimes x'$ has operator norm $\norm{x} \norm{x'}$ and its trace is equal to $\langle x', x \rangle$.
If $x$ and $x'$ are both non-zero, then $x \otimes x'$ is a rank-$1$ operator and otherwise the operator is $0$.

\begin{proposition}
	\label{prop:operator-cone-interior}
	Let $X$ be a finite-dimensional real vector space, ordered by a closed cone $X_+$ with non-empty interior.
	Then the interior of the cone $\calL(X)_+$ in $\calL(X)$ is non-empty
	and consists of precisely those operators $T \in \calL(X)$ that satisfy
	\begin{align*}
		T(X_+ \setminus \{0\})
		\subseteq
		\topInt{X_+}.
	\end{align*}
	In particular, for all $x \in \topInt{X_+}$ and $x' \in \topInt{X_+'}$
	the operator $x \otimes x'$ is in the interior of $\calL(X)_+$ and the cone $\calL(X)_+$ is generating.
\end{proposition}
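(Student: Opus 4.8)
The plan is to characterize the interior of $\calL(X)_+$ as the set $\calI := \{T \in \calL(X) : T(X_+\setminus\{0\}) \subseteq \topInt{X_+}\}$ by proving both inclusions, and then to verify the concrete examples $x \otimes x'$ lie in this set. First I would establish $\calI \subseteq \topInt{\calL(X)_+}$. Fix $T \in \calI$. Since $X_+$ has non-empty interior it is generating and in particular the unit sphere $S := \{x \in X_+ : \norm{x} = 1\}$ is a non-empty compact set. By assumption $Tx \in \topInt{X_+}$ for every $x \in S$, so by compactness of $S$ and continuity of $x \mapsto \dist(Tx, X \setminus \topInt{X_+})$ there is some $\varepsilon > 0$ with $\dist(Tx, X\setminus\topInt{X_+}) \ge \varepsilon$ for all $x \in S$. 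Hence any operator $R$ with $\norm{R - T} < \varepsilon$ satisfies $Rx \in \topInt{X_+} \subseteq X_+$ for all $x \in S$, and by positive homogeneity $RX_+ \subseteq X_+$, i.e.\ $R \in \calL(X)_+$. Thus $T$ is an interior point.

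Next I would prove the reverse inclusion $\topInt{\calL(X)_+} \subseteq \calI$. Suppose $T \in \calL(X)_+$ is \emph{not} in $\calI$, so there is some $0 \neq x_0 \in X_+$ with $Tx_0 \in X_+ \setminus \topInt{X_+} = \partial X_+$. Pick a supporting functional: since $Tx_0$ is a boundary point of the closed convex set $X_+$, there exists $0 \neq x' \in X_+'$ with $\langle x', Tx_0\rangle = 0$. I would then perturb $T$ slightly to push $Tx_0$ out of the cone: consider $T_\delta := T - \delta\, z \otimes x'$ for a fixed $z \in \topInt{X_+}$ and $\delta > 0$ small. Then $\langle x', T_\delta x_0 \rangle = -\delta \langle x', x_0 \rangle$. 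Here the point to be careful about is that this is negative only if $\langle x', x_0\rangle > 0$; if $\langle x', x_0 \rangle = 0$ one should instead choose the functional/vector more cleverly, e.g.\ perturb in a direction that is strictly negative on $Tx_0$ while staying controlled — but in fact one can always arrange a supporting functional at $Tx_0$ that does not vanish at $Tx_0$ is impossible, so the correct fix is: if $\langle x', x_0\rangle = 0$ replace $x_0$ by $x_0 + \eta w$ for suitable $w\in X_+$ with $\langle x', w\rangle > 0$, noting $Tx_0 + \eta Tw$ can be steered out of $X_+$ along $-z$. In any case, for arbitrarily small $\delta$ the operator $T_\delta$ maps some element of $X_+$ outside $X_+$, so $T_\delta \notin \calL(X)_+$, and since $T_\delta \to T$ this shows $T \notin \topInt{\calL(X)_+}$. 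This step — making the perturbation argument clean in the degenerate case $\langle x', x_0 \rangle = 0$ — is the main obstacle; I expect the right move is to use that $x_0 \neq 0$ together with the fact that $X_+'$ is generating (Section~\ref{sec:cones-finite-dim-spaces-operator-spaces}) to choose a supporting functional $x'$ at $Tx_0$ and, if needed, translate $x_0$ within $X_+$ so that $x'$ is strictly positive on it.

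Finally, I would treat the "in particular" claims. Non-emptiness of $\topInt{\calL(X)_+}$ and the fact that $\calL(X)_+$ is generating both follow once we exhibit a single element of $\calI$, and $x \otimes x'$ with $x \in \topInt{X_+}$, $x' \in \topInt{X_+'}$ is the natural candidate: for $0 \neq y \in X_+$ we have $(x \otimes x')y = \langle x', y\rangle\, x$, and since $x' \in \topInt{X_+'}$ the value $\langle x', y \rangle$ is strictly positive for every $0 \neq y \in X_+$ (this is exactly the defining property of the interior of the dual cone — a functional lies in $\topInt{X_+'}$ iff it is strictly positive on $X_+ \setminus \{0\}$), hence $(x\otimes x')y$ is a strictly positive multiple of $x \in \topInt{X_+}$ and therefore lies in $\topInt{X_+}$. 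Thus $x \otimes x' \in \calI = \topInt{\calL(X)_+}$. Since $x$ and $x'$ range over non-empty open sets, $\topInt{\calL(X)_+}$ is non-empty; and a cone with non-empty interior is generating (by the characterization recalled at the end of the Notation subsection), which gives the last assertion.
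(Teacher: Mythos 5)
Your first inclusion (operators mapping $X_+ \setminus \{0\}$ into $\topInt{X_+}$ are interior points, via compactness of the positive part of the unit sphere) and your treatment of the ``in particular'' statements are correct and essentially identical to the paper's argument. The problem is the reverse inclusion, where your contrapositive perturbation argument has a genuine gap that your own proposed repair does not close. You perturb by $T_\delta := T - \delta\, z \otimes x'$ where $x' \in X_+'$ is a supporting functional of $X_+$ at $Tx_0$; this perturbation only moves $Tx_0$ at all when $\langle x', x_0 \rangle > 0$, and the degenerate case $\langle x', x_0 \rangle = 0$ is not exceptional but typical (for $T = \id$ on $\bbR^2$ with the standard cone, \emph{every} admissible choice of $x_0$ and $x'$ is degenerate). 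Your fix --- replacing $x_0$ by $x_\eta := x_0 + \eta w$ with $\langle x', w \rangle > 0$ --- fails: one computes $\langle x', T_\delta x_\eta \rangle = \eta\big(\langle x', Tw\rangle - \delta \langle x', w\rangle \langle x', z\rangle\big) \ge 0$ for all sufficiently small $\delta$, so no exit from the cone is exhibited, and indeed the claim ``for arbitrarily small $\delta$ the operator $T_\delta$ maps some element of $X_+$ outside $X_+$'' is simply false for this perturbation in general. Concretely, take $X = \bbR^2$ with the standard cone, $T = \left(\begin{smallmatrix}1 & 1\\ 0 & 1\end{smallmatrix}\right)$, $x_0 = e_1$, $x' = e_2'$, $z = (1,1)$: then $T - \delta\, z \otimes x' = \left(\begin{smallmatrix}1 & 1-\delta\\ 0 & 1-\delta\end{smallmatrix}\right)$ is nonnegative for all $\delta \le 1$, although $T$ is not an interior point of $\calL(X)_+$ (since $Te_1 = e_1 \in \partial X_+$).

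The underlying issue is that you conflate two roles: the functional used to build the rank-one perturbation must \emph{not vanish at $x_0$} (and need not belong to $X_+'$ at all), whereas the supporting functional at $Tx_0$ serves only as a certificate that a point of the form $Tx_0 - \delta z$, $z \in \topInt{X_+}$, lies outside $X_+$. The paper avoids the problem by arguing the implication directly rather than by contraposition: given $T \in \topInt{\calL(X)_+}$, $0 \neq x \in X_+$, and any $y$ with $\norm{y} < \varepsilon \norm{x}$, choose by Hahn--Banach a norm-one functional $x'$ with $\langle x', x \rangle = \norm{x}$ (no positivity of $x'$ required) and set $S := \frac{1}{\norm{x}}\, y \otimes x'$; then $\norm{S} < \varepsilon$, so $T + S \in \calL(X)_+$, and $Sx = y$ gives $Tx + y = (T+S)x \in X_+$, whence $Tx \in \topInt{X_+}$. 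If you want to keep your contrapositive formulation, the analogous fix is to perturb by $-\delta\, z \otimes \xi$ with $\xi$ any functional satisfying $\langle \xi, x_0 \rangle = 1$, so that $T_\delta x_0 = Tx_0 - \delta z$, and then use the supporting functional at $Tx_0$ only to verify $Tx_0 - \delta z \notin X_+$. As written, however, the step you yourself flag as ``the main obstacle'' is a real gap, not just a loose end.
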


\begin{proof}
	First note that, if the claimed equivalence is true,
	then the last claim in the proposition follows immediately:
	for all $x \in \topInt{X_+}$, $x' \in \topInt{X_+'}$, and $y \in X_+ \setminus \{0\}$,
	one has $\langle x', y \rangle > 0$
	(since the mapping $\langle \argument, y \rangle: X' \to \bbR$ is open)
	and thus $(x \otimes x')y = \langle x', y \rangle x \in \topInt{X_+}$.
	In particular, $\topInt{\mathcal{L}(X)_+}$ is non-empty since $\topInt{X_+}$ and $\topInt{X'_+}$ are non-empty.
	So by~\cite[Lemma~3.2 on p.\;119]{AliprantisTourky2007} the cone $\mathcal{L}(X)_+$ is indeed generating.

	Now fix $T \in \calL(X)$.
	We need to show that
	$T \in \topInt{\mathcal{L}(X)_+}$ if and only if $T(X_+ \setminus \{0\}) \subseteq \topInt{X_+}$.

	\rightProof
	Let $T \in \topInt{\mathcal{L}(X)_+}$ and $x \in X_+ \setminus \{0\}$.
	Then there exists an $\varepsilon >0$ such that, whenever $S \in \calL(X)$ with $\norm{S} < \varepsilon$, we have $T + S \in \calL(X)_+$. 
	Let $y \in X$ with $\norm{y} < \varepsilon \norm{x}$. 
	We claim that $Tx + y \in X_+$, which shows that $Tx \in \topInt{X_+}$.
	
	By a corollary of the Hahn-Banach extension theorem, see~\cite[Corollary 2.3.4]{Pedersen1989}, 
	there exists a continuous linear functional $x' : X \to \bbR$ of norm $1$ such that $\langle x', x \rangle = \norm{x}$.
	Define an operator $S := \frac{1}{\norm{x}} y \otimes x'$. 
	Then $Sx = y$ and $\norm{S} < \varepsilon$,
	so $T + S \in \calL(X)_+$ and it follows that $Tx + y = (T + S)x \in X_+$, as claimed.

	\leftProof
	Let $T \in \mathcal{L}(X)$ satisfy $T(X_+ \setminus \{0\}) \subseteq \topInt{X_+}$. 
	Denote the intersection of $X_+$ with the unit sphere in $X$ by $A$.
	As $A$ is compact and $T$ is continuous, $TA$ is a compact subset of $\topInt{X_+}$.
	Thus we can find an $\varepsilon > 0$ such that the open $\varepsilon$-neighbourhood of $TA$ 
	is contained in $\topInt{X_+}$.
	
	Now consider an operator $S \in \calL(X)$ of norm $\norm{S} < \varepsilon$.
	For every normalized vector $x \in X_+$ one has $\norm{Sx} < \varepsilon$, 
	so the distance of $(T+S)x$ to $TA$ is strictly smaller than $\varepsilon$ and thus, 
	$(T+S)x \in \topInt{X_+}$.
	Hence, we have $(T + S)x \in X_+$ for all $x \in X_+$, which shows that $T + S \in \calL(X)_+$.
	So $T$ is indeed an interior point of $\calL(X)_+$.
\end{proof}

Let the finite-dimensional real vector space $X$ be endowed with a closed cone $X_+$ with non-empty interior. 
We now describe the dual cone $\calL(X)'_+$.
To this end, we need a specific type of linear functionals $\calL(X) \to \bbR$:
for all vectors $x \in X$ and $x' \in X'$ we define the functional $\varphi_{x,x'}$ on $\calL(X)$
by
\begin{align*}
	\langle \varphi_{x,x'}, M \rangle := \langle x', M x \rangle = \trace \big( M (x \otimes x') \big)
\end{align*}
for all $M \in \calL(X)$,
where the operator $x \otimes x' \in \calL(X)$ was defined before Proposition~\ref{prop:operator-cone-interior}. 
Here $\trace(M)$ denotes the trace of the operator $M$.
It is not difficult to check that $\norm{\varphi_{x,x'}} = \norm{x} \norm{x'}$.
By representing all elements of $\calL(X)$ as matrices one can easily see that
the set of all functionals $\varphi_{x,x'}$ spans the dual space $\calL(X)'$ of $\calL(X)$. 

\begin{proposition}
	\label{prop:operator-dual-cone}
	Let $X$ be a finite-dimensional real vector space, ordered by a closed cone $X_+$ with non-empty interior.
	Then the dual cone of $\calL(X)_+$ in $\calL(X)'$ is the convex hull of the set
	\begin{align*}
		E := \big\{ \varphi_{x,x'}: \; x \in X_+ \text{ and } x' \in X'_+ \big\}.
	\end{align*}
\end{proposition}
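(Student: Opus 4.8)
The plan is to realise $\conv(E)$ as the dual cone of $\calL(X)_+$ by a bipolar (Hahn--Banach separation) argument; the one substantial point will be that $\conv(E)$ is closed, and for this Lemma~\ref{lem:closed-convexification} is tailor-made.

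First I would dispatch the inclusion $\conv(E) \subseteq \calL(X)'_+$: for $x \in X_+$, $x' \in X'_+$ and any nonnegative operator $T$ one has $Tx \in X_+$, hence $\langle \varphi_{x,x'}, T \rangle = \langle x', Tx \rangle \ge 0$, so $E \subseteq \calL(X)'_+$, and since the dual cone $\calL(X)'_+$ is convex, $\conv(E) \subseteq \calL(X)'_+$ follows. Because $\calL(X)_+$ is generating by Proposition~\ref{prop:operator-cone-interior}, its dual cone $\calL(X)'_+$ is pointed, and therefore $\conv(E) \cap -\conv(E) = \{0\}$.

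Next I would check that $E$ itself is closed and satisfies $\lambda E \subseteq E$ for all $\lambda \ge 0$. Dilation invariance is immediate from $\varphi_{\lambda x, x'} = \lambda \varphi_{x,x'}$ together with $\lambda x \in X_+$. For closedness, given a convergent sequence $\varphi_{x_n, x_n'} \to \psi$ I would treat $\psi = 0$ trivially (it equals $\varphi_{0,0} \in E$) and otherwise use the identity $\varphi_{x,x'} = \varphi_{x/\norm{x},\, \norm{x}\, x'}$ to normalise $\norm{x_n} = 1$; then $\norm{x_n'} = \norm{\varphi_{x_n,x_n'}} \to \norm{\psi}$ stays bounded, so along a subsequence $x_n \to x \in X_+$ and $x_n' \to x' \in X'_+$, whence $\psi = \varphi_{x,x'} \in E$ by continuity of the bilinear map $(x,x') \mapsto \varphi_{x,x'}$. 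Combined with the previous paragraph, all hypotheses of Lemma~\ref{lem:closed-convexification} are satisfied, so $\conv(E)$ is closed; being the convex hull of a $[0,\infty)$-invariant set, it is a closed convex cone. This normalisation/compactness step, together with the invocation of Lemma~\ref{lem:closed-convexification}, is the heart of the proof and the part I expect to require the most care.

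Finally I would run the bipolar argument. Identifying $\calL(X)'' = \calL(X)$, the dual cone of $\conv(E)$ inside $\calL(X)$ consists of those $T$ with $\langle \varphi, T \rangle \ge 0$ for all $\varphi \in \conv(E)$; by linearity it suffices to test this on the generators, i.e.\ $\langle x', Tx \rangle \ge 0$ for all $x \in X_+$ and $x' \in X'_+$, which by bipolarity of the closed cone $X_+$ is equivalent to $Tx \in X_+$ for all $x \in X_+$, that is, $T \in \calL(X)_+$. Hence the dual cone of $\conv(E)$ equals $\calL(X)_+$. Dualising once more -- which is legitimate precisely because $\conv(E)$ is a closed convex cone -- yields $\conv(E) = \calL(X)_+' = \calL(X)'_+$, which is the claimed identity.
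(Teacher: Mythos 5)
Your proof is correct and follows essentially the same route as the paper's: in both, the decisive step is the closedness of $\conv(E)$, obtained from Lemma~\ref{lem:closed-convexification} via the same normalisation/compactness argument for the closedness of $E$ and the pointedness of $\calL(X)'_+$ (coming from $\calL(X)_+$ being generating). The only cosmetic difference is that you package the remaining identification as a bipolar-theorem argument, whereas the paper writes out the underlying Hahn--Banach separation explicitly.
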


\begin{proof}
	Let us first show that $\overline{\conv}(E) = \calL(X)_+'$. 
	
	\subsetProof
	It follows readily from the definition of the functionals $\varphi_{x,x'}$ that $E \subseteq \calL(X)_+'$.
	Since $\calL(X)_+'$ is convex and closed it follows that $\overline{\conv}(E) \subseteq \calL(X)_+'$. 
	
	\supsetProof
	Suppose to the contrary that we can find a functional $\varphi \in \calL(X)_+'$ that is not in $\overline{\conv}(E)$.
	By the Hahn-Banach separation theorem there exists $M \in \calL(X) (\simeq \calL(X)'')$ such that
	\begin{equation*}
		\langle \varphi, M \rangle < a := \inf \{ \langle \psi, M \rangle \mid  \psi \in \overline{\conv}(E) \}.
	\end{equation*}
	The set $\{ \langle \psi, M \rangle \mid  \psi \in \overline{\conv}(E) \}$ contains $0$ 
	(as $E$ contains the zero functional) and is stable under multiplication with nonnegative scalars; 
	as the set is bounded below by $\langle \varphi, M \rangle$, it follows that $a=0$.
	
	This implies that $\langle x', Mx \rangle \geq 0$ for all $x \in X_+$ and all $x' \in X_+'$, 
	so $M \in \calL(X)_+$.
	By the positivity of $\varphi$ we obtain $\langle \varphi, M \rangle \geq 0$, 
	which is a contradiction;
	this completes the proof of ``$\subseteq$''.
	
	It remains to show that $\conv(E)$ is closed.
	To this end we only need to check that $E$ satisfies the assumptions of Lemma~\ref{lem:closed-convexification}.
	Clearly, $E$ is closed under multiplication with nonnegative scalars. 
	Moreover, since $\conv(E)$ is a subset of $\calL(X)'_+$ and the latter set is a cone 
	(as $\calL(X)_+$ has non-empty interior, see Proposition~\ref{prop:operator-cone-interior}), 
	it follows that $\conv(E) \cap -\conv(E) = \{0\}$.
	
	To show the closedness of $E$, let $\varphi_{x_n, x'_n}$ be a sequence in $E$ 
	that converges to a functional $\psi \in \calL(X)'$. 
	As $\norm{\varphi_{x_n, x'_n}} = \norm{x_n} \norm{x_n'}$ for all $n$,
	we can rescale all the vectors $x_n$ and $x'_n$ such that the functionals $\varphi_{x_n, x'_n}$ do not change  
	but both sequences $(x_n)$ and $(x'_n)$ become bounded. 
	After switching to subsequences we may thus assume that $(x_n)$ converges to a vector $x \in X_+$ 
	and that $(x'_n)$ converges to a vector $x' \in X'_+$. 
	This yields $\psi = \varphi_{x,x'} \in E$.
\end{proof}

\section{Counterexamples}
\label{sec:counterexamples}

In this section we give several counterexamples to show that it is indeed necessary
to distinguish the different versions of eventual nonnegativity in Definition~\ref{def:eventual-nonnegativity}.
Example~\ref{exa:uniform-vs-individual} demonstrates that individual eventual nonnegativity
does not imply its uniform counterpart,
and Example~\ref{exa:individual-vs-weak} shows that weak eventual nonnegativity
does not imply the individual property. 

Moreover, we show in Example~\ref{exa:ind-semigroup-and-dual} that both a semigroup and its dual 
can be individually eventually nonnegative without being uniformly eventually nonnegative.
On the other hand, individual eventual nonnegativity is not respected by dualization in general, 
as we demonstrate in Example~\ref{exa:weak-indiviual-nonnegativity-doesnot-dualize}
-- in contrast to the weak and the uniform case, see Proposition~\ref{prop:dualize}.

\begin{example}[Uniform vs.\ individual eventual nonnegativity]
	\label{exa:uniform-vs-individual}
	There exists a closed cone $\bbR^4_+ \subseteq \bbR^4$ with non-empty interior
	and a matrix $A \in \bbR^{4 \times 4}$ such that
	the semigroup $(e^{tA})_{t \ge 0}$ is individually but not uniformly eventually nonnegative.

	Let $\bbR^4_+$ be the linearly transformed ice cream cone that is given by
	\begin{align*}
		\bbR^4_+
		 & :=
		\Big\{x \in \bbR^4 : \; (x_2 - x_1)^2 + x_3^2 + x_4^2 \le x_1^2 \text{ and } x_1 \ge 0 \Big\} \\
		 & =
		\Big\{x \in \bbR^4 : \; x_2^2  + x_3^2 + x_4^2  \le  2 x_1 x_2  \text{ and } x_1 \ge 0 \Big\}.
	\end{align*}
	Note that the following properties hold for every $x \in \bbR^4_+$: 
	(i) we have $x_1 = 0$ if and only if $x = 0$; 
	(ii) one has $x_2 \ge 0$; 
	and (iii) $x_2 = 0$ if and only all three components $x_2,x_3,x_4$ are equal to $0$.

	The bijective linear map
	\begin{align*}
		\begin{pmatrix}
			x_1 \\ x_2 \\ x_3 \\ x_4
		\end{pmatrix}
		\mapsto
		\begin{pmatrix}
			x_1 \\ x_2 - x_1 \\ x_3 \\ x_4
		\end{pmatrix}
	\end{align*}
	maps $\bbR^4_+$ onto the usual ice cream cone
	\begin{align*}
		\bbR^4_{\operatorname{ICE}} := \Big\{ x \in \bbR^4 : \; x_2^2 + x_3^2 + x_4^2 \le x_1^2  \text{ and }  x_1 \ge 0 \Big\}.
	\end{align*}
	Hence, $\bbR^4_+$ is isomorphic to the usual ice cream cone;
	in particular, $\bbR^4_+$ is indeed a closed cone with non-empty interior.

	Now we consider the block diagonal matrices
	\begin{align*}
		A :=
		\begin{pmatrix}
			0 & 1 &                         &              \\
			0 & 0 &                         &              \\
			  &   & 0                       & 2 \cdot 2\pi \\
			  &   & -\frac{1}{2} \cdot 2\pi & 0
		\end{pmatrix}
		, 
		\quad 
		e^{tA} =
		\begin{pmatrix}
			1 & t &                          &               \\
			0 & 1 &                          &               \\
			  &   & \cos(2\pi t)             & 2\sin(2\pi t) \\
			  &   & -\frac{1}{2}\sin(2\pi t) & \cos(2\pi t)
		\end{pmatrix}
		,
	\end{align*}
	where the formula for $e^{tA}$ for each $t \ge 0$
	follows by using that
	\begin{align*}
		\begin{pmatrix}
			0                       & 2 \cdot 2\pi \\
			-\frac{1}{2} \cdot 2\pi & 0
		\end{pmatrix}
		=
		\begin{pmatrix}
			2 & 0 \\
			0 & 1
		\end{pmatrix}
		\begin{pmatrix}
			0     & 2\pi \\
			-2\pi & 0
		\end{pmatrix}
		\begin{pmatrix}
			\frac{1}{2} & 0 \\
			0           & 1
		\end{pmatrix}.
	\end{align*}
	Let us show that $(e^{tA})_{t \ge 0}$ is individually eventually nonnegative 
	with respect to the cone $\bbR^4_+$:

	Fix $x \in \bbR^4_+$.
	If $x = 0$ there is nothing to show, so let $x \not= 0$ and thus $x_1 \not= 0$.
	If $x_2 = 0$, then $x$ is a multiple of the first canonical unit vector
	and hence $e^{tA}x = x \in \bbR^4_+$ for all times $t \ge 0$.
	So let us now consider the case where both $x_1,x_2$ are positive.
	For each $t \ge 0$ the vector
	\begin{align*}
		y(t) := e^{tA}x
	\end{align*}
	satisfies $y_1(t) = x_1 + tx_2$ and $y_2(t) = x_2$.
	Moreover the components $y_3(t)$ and $y_4(t)$ are bounded as $t \to \infty$.
	Thus, the inequality
	\begin{align*}
		y_2(t)^2 + y_3(t)^2 + y_4(t)^2 \le 2 y_1(t) y_2(t)
	\end{align*}
	is satisfied for all sufficiently large times $t$.
	This shows that $y(t) \in \bbR^4_+$ for all sufficiently large $t$.

	Finally we show that the semigroup is not uniformly eventually nonnegative 
	with respect to the cone $\bbR^4_+$.
	For each integer $n \ge 1$ we consider the vector
	\begin{align*}
		x(n) :=
		\begin{pmatrix}
			n \\ 1 \\ 0 \\ \sqrt{2n-1}.
		\end{pmatrix}
	\end{align*}
	One readily checks that $x(n) \in \bbR^4_+$.
	At the times $t_n := n - \frac{3}{4}$ we have $\sin(2\pi t_n) = 1$ and $\cos(2\pi t_n) = 0$, and thus
	\begin{align*}
		e^{t_n A} x(n)
		=
		\begin{pmatrix}
			n + t_n       \\
			1             \\
			2 \sqrt{2n-1} \\
			0
		\end{pmatrix}.
	\end{align*}
	Using this explicit formula, one can now check by a brief computation that the vector $z := e^{t_n A} x(n)$ 
	satisfies $2 z_1 z_2 < z_2^2 + z_3^2 + z_4^2$,
	so $e^{t_n A} x(n) \not\in \bbR^4_+$.
	Hence, we found a sequence of vectors $x(n)$ in $\bbR^4_+$ and a sequence of times $t_n \to \infty$
	such that $e^{t_n A} x(n) \not\in \bbR^4_+$ for each $n$.
	So $(e^{tA})_{t \ge 0}$ is not uniformly eventually nonnegative.
\end{example}

We do not know whether there also exists a counterexample in dimension~$3$.

\begin{remark}
	In~\cite[Proposition~1]{Sootla2019} it was claimed that individual and uniform
	eventual nonnegativity are equivalent.%
	\footnote{
		Note that this is stated there using different wording
		since the terminology in~\cite{Sootla2019} differs from ours.
	}
	Example~\ref{exa:uniform-vs-individual} shows that this is not correct.%
	\footnote{
		This was also kindly confirmed to us by the author of~\cite{Sootla2019}.
	}

	At first glance this appears to cause complications for some of the arguments in~\cite{Sootla2019} 
	that rely on spectral results for eventually nonnegative matrix semigroups
	from~\cite{KasigwaTsatsomeros2017},
	as the latter were established in~\cite{KasigwaTsatsomeros2017}
	only for the case of uniform eventual nonnegativity.
	This can be resolved, though, as we show
	in Section~\ref{sec:spectral-properties-weakly-eventually-nonnegative-case}
	that the same spectral results remain true
	for the individually (and in fact, even the weakly) eventually nonnegative case.
\end{remark}

\begin{example}[Individual vs.\ weak eventual nonnegativity]
	\label{exa:individual-vs-weak}
	There exists a finite-dimensional real vector space $Y$ ordered by a closed cone $Y_+$ with non-empty interior
	and an operator $B \in \calL(Y)$ such that the semigroup $(e^{tB})_{t \ge 0}$ is
	weakly but not individually eventually nonnegative.

	Indeed, let $X$ be a finite-dimensional real vector space ordered by a closed cone $X_+$ with non-empty interior
	and $A \in \calL(X)$ an operator such that the semigroup $(e^{tA})_{t \ge 0}$ is
	individually but not uniformly eventually nonnegative.
	Such objects exist according to Example~\ref{exa:uniform-vs-individual}.

	Consider the space $Y := \calL(X)$ and endow it with the cone $Y_+ := \calL(X)_+$ of nonnegative operators.
	Let $B \in \calL(Y)$ be defined by left multiplication with $A$, i.e.\
	\begin{align*}
		B(M) := AM
	\end{align*}
	for each $M \in \calL(X)$.
	Then $e^{tB}(M) = e^{tA}M$ for each $t \ge 0$ and each $M \in \calL(X)$.

	The matrix semigroup $(e^{tB})_{t \ge 0}$ is not individually eventually nonnegative,
	since $e^{tB} \id_X = e^{tA}$ is not eventually in $\calL(X)_+$.
	However, the semigroup $(e^{tB})_{t \ge 0}$ is weakly eventually nonnegative.
	To see this, let $M \in \calL(X)_+$, $x \in X_+$, and $x' \in X'_+$.
	Consider the functional $\varphi_{x,x'}$ on $\calL(X)$ introduced before Proposition~\ref{prop:operator-dual-cone}:
	one has
	\begin{align*}
		\langle \varphi_{x,x'}, e^{tB}(M) \rangle_{Y' \times Y} = \langle x', e^{tA} M x \rangle_{X' \times X},
	\end{align*}
	and the latter value is in $[0,\infty)$ for all sufficiently large times $t$
	since $e^{tA}Mx$ is in $X_+$ for all sufficiently large $t$.
	According to Proposition~\ref{prop:operator-dual-cone} the dual cone in $\calL(X)'$ is the convex hull
	of all such functionals $\varphi_{x,x'}$.
	This implies that
	\begin{align*}
		\langle \varphi, e^{tB}(M) \rangle_{Y' \times Y} \ge 0
	\end{align*}
	for all sufficiently large $t$ whenever $\varphi \in \calL(X)'_+$ and $M \in \calL(X)_+$.
	Hence, the semigroup $(e^{tB})_{t \ge 0}$ is indeed weakly eventually nonnegative.
\end{example}

Clearly, the space $Y$ in the above example can be chosen to have dimension $16$  
(as the space in Example~\ref{exa:uniform-vs-individual} has dimension $4$).
We do not know an example in smaller dimension.

Next we analyze how eventual nonnegativity behaves with respect to dualization. 
For the uniform and the weak case the answer is simple and is given in the following proposition. 
The proof is straightforward, so we omit it.

\begin{proposition}
	\label{prop:dualize}
	Let $X$ be a finite-dimensional real vector space, ordered by a closed cone $X_+$ with non-empty interior.
	Let $A \in \calL(X)$.
	\begin{enumerate}[\upshape (a)]
		\item\label{prop:dualize:itm:uniform}
		The semigroup $(e^{tA})_{t \ge 0}$ is uniformly eventually nonnegative
		if and only if the same is true for its dual semigroup.

		\item\label{prop:dualize:itm:weak}
		The semigroup $(e^{tA})_{t \ge 0}$ is weakly eventually nonnegative
		if and only if the same is true for its dual semigroup.
	\end{enumerate}
\end{proposition}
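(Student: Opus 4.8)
The plan is to reduce both parts to two elementary facts: that $(e^{tA})' = e^{tA'}$ for every $t \ge 0$, and the bipolar identity, which under the canonical identification of the bidual $X''$ with $X$ says that the bidual cone $(X'_+)'$ equals $X_+$ -- and it is precisely here that the closedness of $X_+$ enters. Before starting I would recall, using the facts collected in the Notation paragraph of Section~\ref{sec:cones-finite-dim-spaces-operator-spaces}, that $X'_+$ is again a closed cone with non-empty interior, so that the dual semigroup $(e^{tA'})_{t \ge 0}$ on the ordered space $(X', X'_+)$ falls under the same framework and Definition~\ref{def:eventual-nonnegativity} applies to it verbatim.

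For part~\ref{prop:dualize:itm:uniform} the main step is the single-operator statement: for $T \in \calL(X)$ one has $T X_+ \subseteq X_+$ if and only if $T' X'_+ \subseteq X'_+$. The forward implication is immediate from $\langle x', Tx\rangle = \langle T'x', x\rangle$ together with the definition of the dual cone; for the converse one first obtains $Tx \in (X'_+)'$ for every $x \in X_+$ and then invokes the bipolar identity $(X'_+)' = X_+$. Applying this equivalence to $T = e^{tA}$ for each fixed $t$ gives $e^{tA} X_+ \subseteq X_+ \Leftrightarrow e^{tA'} X'_+ \subseteq X'_+$; consequently a threshold $t_0 \ge 0$ works for $(e^{tA})_{t\ge 0}$ precisely when the same $t_0$ works for the dual semigroup, which is the assertion.

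For part~\ref{prop:dualize:itm:weak} I would exploit the symmetry of the duality pairing. Given $x \in X_+$ and $x' \in X'_+$, the adjoint relation yields $\langle x', e^{tA}x\rangle = \langle e^{tA'}x', x\rangle$, and under the identification $X'' \cong X$ the right-hand side is exactly the pairing of the functional $e^{tA'}x' \in X'$ with $x$ regarded as an element of $X''$. Since the bipolar identity identifies $X''_+$ with $X_+$, the set of admissible pairs $(x,x') \in X_+ \times X'_+$ coincides with the set of admissible pairs $(x'', x') \in X''_+ \times X'_+$, so the defining condition of weak eventual nonnegativity for $(e^{tA})_{t\ge0}$ is, word for word, that of weak eventual nonnegativity for $(e^{tA'})_{t\ge 0}$ on $(X', X'_+)$. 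Hence the two properties are equivalent.

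There is essentially no real obstacle in this argument; the only points demanding a moment's care are keeping track of the canonical identification $X'' \cong X$ and invoking the closedness of $X_+$ at precisely the step where the bidual cone is collapsed back to $X_+$.
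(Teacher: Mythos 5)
Your argument is correct: the single-operator equivalence $TX_+ \subseteq X_+ \Leftrightarrow T'X'_+ \subseteq X'_+$ via the bipolar identity $(X'_+)' = X_+$ (where closedness of $X_+$ is indeed the key hypothesis), combined with $(e^{tA})' = e^{tA'}$ and the symmetry of the pairing, settles both parts. The paper omits the proof as straightforward, and your write-up is exactly the intended routine argument, so there is nothing to add.
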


Let us demonstrate now that the situation is more involved for individual eventual nonnegativity. 
We first show for the individually (but not uniformly) eventually nonnegative semigroup 
from Example~\ref{exa:uniform-vs-individual} that the dual semigroup is individually eventually nonnegative, too 
(Example~\ref{exa:ind-semigroup-and-dual}). 
Hence, individual eventual nonnegativity of both a semigroup and its dual 
does not imply uniform eventual nonnegativity. 

Afterwards we show for the weakly (but not individually) eventually nonnegative semigroup 
from Example~\ref{exa:individual-vs-weak} that the dual semigroup is even individually eventually nonnegative 
(Example~\ref{exa:weak-indiviual-nonnegativity-doesnot-dualize}).
Hence, individual eventual nonnegativity is not preserved by taking (pre-)duals.

\begin{example}[Individual eventual nonnegativity of a semigroup and its dual]
	\label{exa:ind-semigroup-and-dual}
	There exists a finite-dimensional real vector space $X$ ordered by a closed cone $X_+$ with non-empty interior and an operator $A \in \calL(X)$ such that the semigroup $(e^{tA})_{t \geq 0}$ and its dual semigroup $(e^{tA'})_{t \geq 0}$ are individually eventually nonnegative but $(e^{tA})_{t \geq 0}$ is not uniformly eventually nonnegative.

	Endow $X := \bbR^4$ with the cone $X_+ := \bbR^4_+$ from Example~\ref{exa:uniform-vs-individual} 
	and let $A \in \bbR^{4 \times 4}$ denote the matrix from this example.
	Then the dual semigroup $(e^{tA'})_{t \ge 0}$ is also individually eventually nonnegative.
	
	To see this, we identify the dual space $(\bbR^4)'$ with $\bbR^4$ in the usual way. 
	By using the isomorphism between $\bbR^4_+$ and $\bbR^4_{\operatorname{ICE}}$
	given in Example~\ref{exa:uniform-vs-individual} and the fact that $\bbR^4_{\operatorname{ICE}}$ is self-dual 
	under the usual identification of $\bbR^4$ with $(\bbR^4)'$, see~\cite[Introduction to Chapter~2 on p.\;377]{Loewy1975},
	it is straightforward to check that the bijective linear map
	\begin{align*}
		\begin{pmatrix}
			x_1 \\ x_2 \\ x_3 \\ x_4
		\end{pmatrix}
		\mapsto
		\begin{pmatrix}
			x_1 + x_2 \\ x_2 \\ x_3 \\ x_4
		\end{pmatrix}
	\end{align*}
	maps the dual cone $X'_+$ of $\bbR^4_+$ onto $\bbR^4_{\operatorname{ICE}}$, 
	and thus
	\begin{align*} 
		X'_+ 
		& =
		\left\{ 
			x \in \mathbb{R}^4 
			\mid 
			x_2^2 + x_3^2 + x_4^2 \leq (x_1+x_2)^2 \text{ and } x_1 + x_2 \ge 0 
		\right \}
		\\ 
		& =
		\left\{ 
			x \in \mathbb{R}^4 
			\mid 
			x_3^2 + x_4^2 \leq x_1^2 + 2 x_1 x_2 \text{ and } x_1 \geq -x_2 
		\right \}
		.
	\end{align*}
	Now fix a vector $x \in X'_+$ and consider its trajectory $y$ under the dual semigroup, 
	given by $y(t) := e^{tA'}x$ for each $t \ge 0$.
	By the formula for $e^{tA}$ in Example~\ref{exa:uniform-vs-individual} one has
	\begin{align*}
		e^{tA'} 
		=
		\begin{pmatrix}
			1 & 0 &               &                          \\
			t & 1 &               &                          \\
			  &   & \cos(2\pi t)  & -\frac{1}{2}\sin(2\pi t) \\
			  &   & 2\sin(2\pi t) & \cos(2\pi t)
		\end{pmatrix}
	\end{align*}
	for each $t \ge 0$. 
	If $x_1 = 0$, then $x$ is a multiple of the second unit vector and thus $y(t) = x \in X'_+$ for all $t \ge 0$.
	If $x_1 < 0$, then $x_2 > 0$, and multiplying the inequality $x_1 \ge -x_2$ with $x_1$ yields $x_1^2 \leq -x_1 x_2$, 
	from which we deduce
	\begin{align*}
		0 \leq x_3^2 + x_4^2 \leq x_1^2 + 2 x_1 x_2 \leq x_1 x_2 < 0,
	\end{align*}
	a contradiction. 
	So we may assume that $x_1 > 0$.
	Then for large $t \geq 0$ the inequalities
	\begin{align*}
		y_1(t)^2 + 2 y_1(t) y_2(t)
		=
		x_1^2 + 2 t x_1^2 + 2 x_1 x_2
		\ge
		y_3(t)^2 + y_4(t)^2
	\end{align*}
	and
	\begin{align*}
		y_1(t) + y_2(t) = (1 + t) x_1 + x_2 \geq 0
	\end{align*}
	hold, since the right hand side of the first inequality is bounded.
	So $y(t) \in X'_+$ for all sufficiently large $t$, as claimed.
\end{example}

\begin{example}[Individual eventual nonnegativity does not dualize]
	\label{exa:weak-indiviual-nonnegativity-doesnot-dualize}
	There exists a finite-dimensional real vector space $Y$ ordered by a closed cone $Y_+$ with non-empty interior and an operator $B \in \calL(Y)$ such that the dual semigroup $(e^{tB'})_{t \geq 0}$ is individually eventually nonnegative but the semigroup $(e^{tB})_{t \geq 0}$ itself is not individually eventually nonnegative.

	Indeed, consider the semigroup $(e^{tB})_{t \geq 0}$ on $Y = \calL(X)$ from Example~\ref{exa:individual-vs-weak}, 
	where $X = \bbR^4$.
	Recall that this semigroup is weakly but not individually eventually nonnegative.
	We claim that the dual semigroup $(e^{tB'})_{t \geq 0}$ on $\mathcal{L}(X)'$ is individually eventually nonnegative.%
	\footnote{
		As $\mathcal{L}(X)$ is finite-dimensional and thus canonically isomorphic to its bidual, 
		the bidual semigroup $(e^{tB''})_{t \geq 0}$ can be identified with $(e^{tB})_{t \geq 0}$.
		Hence, this example shows that the dual semigroup of an individually eventually nonnegative semigroup 
		need not be individually eventually nonnegative, in general.
	}
		
	To see this, recall from Proposition~\ref{prop:operator-dual-cone}
	that the dual cone $\mathcal{L}(X)_+'$ of $\mathcal{L}(X)_+$ is the convex hull of
	\begin{align*}
		\big\{ 
			\varphi_{x, x'}
			\mid 
			x \in X_+ \text{ and } x' \in X_+' 
		\big\}
		.
	\end{align*}
	So fix $x \in X_+$ and $x' \in X'_+$. 
	It suffices to show that the operator $e^{tB'} \varphi_{x, x'}$ is positive for all sufficiently large $t$.
	
	A brief computation shows that%
	\footnote{
		As before we let $A \in \bbR^{4 \times 4}$ denote the matrix from Example~\ref{exa:uniform-vs-individual}.
	}
	\begin{align*}
		e^{tB'} \varphi_{x, x'}
		= 
		\varphi_{x, e^{tA'}x'}
		,
	\end{align*}
	for all $t$, 
	and the latter operator is indeed positive for large $t$ 
	since the semigroup $(e^{tA'})_{t \ge 0}$ was shown to be individually eventually nonnegative  
	in Example~\ref{exa:ind-semigroup-and-dual}.
\end{example}

\section{Spectral properties in the weakly eventually nonnegative case}
\label{sec:spectral-properties-weakly-eventually-nonnegative-case}

We now show that, for weakly eventually nonnegative semigroups $(e^{tA})_{t \ge 0}$,
typical Perron--Frobenius (or Krein--Rutman) like properties hold for the \emph{spectral bound}
\begin{align*}
	\spb(A) = \max \{\re \lambda: \; \lambda \in \spec(A)\}
\end{align*}
of $A$ (where $\sigma(A)$ denotes the spectrum of $A$). 
Here we understand the spectrum of an operator $A$ on a finite-dimensional real vector space $X$ 
by extending $A$ to a \emph{complexification} of $X$ and considering the spectrum of this complex extension. 
For $X = \bbR^d$ and $A \in \bbR^{d \times d}$ this simply means to consider $A$ as a matrix in $\bbC^{d \times d}$.

\begin{theorem}\label{thm:spectral-bound}
	Let $\{0\} \not= X$ be a finite-dimensional real vector space, ordered by a closed cone $X_+$ with non-empty interior.
	Let $A \in \calL(X)$ and assume that the semigroup $(e^{tA})_{t \ge 0}$ is weakly eventually nonnegative.
	\begin{enumerate}[\upshape (a)]
		\item\label{thm:spectral-bound:itm:eigenvalue}
		The spectral bound $\spb(A)$ is an eigenvalue of $A$.

		\item\label{thm:spectral-bound:itm:eigenvector}
		There exists an eigenvector $x \in X_+$ of $A$ for the eigenvalue $\spb(A)$.

		\item\label{thm:spectral-bound:itm:dual-eigenvector}
		There exists an eigenvector $x' \in X'_+$ of the dual operator $A'$ for the eigenvalue $\spb(A)$.
	\end{enumerate}
\end{theorem}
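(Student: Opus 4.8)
The plan is to reduce the weak eventual nonnegativity of $(e^{tA})_{t\ge 0}$ on $X$ to a genuine (non-eventual) positivity statement on the operator space $\calL(X)$, where classical Perron--Frobenius theory for cones applies directly, and then to transfer the conclusion back to $X$. Concretely, consider the semigroup on $\calL(X)$ given by left multiplication, $B(M) := AM$, so that $e^{tB}(M) = e^{tA}M$; this is exactly the construction used in Example~\ref{exa:individual-vs-weak}. The key observation is that weak eventual nonnegativity of $(e^{tA})_{t\ge 0}$ translates, via the description of $\calL(X)'_+$ as $\conv(E)$ in Proposition~\ref{prop:operator-dual-cone}, into the statement that for each $M \in \calL(X)_+$ and each $\varphi \in \calL(X)'_+$ one has $\langle \varphi, e^{tB}M\rangle \ge 0$ for all large $t$ — i.e.\ $(e^{tB})_{t\ge 0}$ is weakly eventually nonnegative on $\calL(X)$ as well. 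The first step is therefore to argue this equivalence carefully (it is essentially the computation in Example~\ref{exa:individual-vs-weak}, run in both directions, together with the fact that the functionals $\varphi_{x,x'}$ span $\calL(X)'$). Note also that $\spec(B) = \spec(A)$ since $B$ is left multiplication by $A$, so $\spb(B) = \spb(A)$.

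Next I would establish the Perron--Frobenius conclusion for $B$ on $\calL(X)$ with its cone $\calL(X)_+$, which has nonempty interior by Proposition~\ref{prop:operator-cone-interior}. The standard route: after rescaling (replacing $A$ by $A - \spb(A)\id$, hence $B$ by $B - \spb(A)\id$) we may assume $\spb(A) = 0$, and we must show $0 \in \spec(B)$ with a nonnegative eigenvector in $\calL(X)_+$ and a nonnegative dual eigenvector in $\calL(X)'_+$. For weakly eventually nonnegative semigroups on a finite-dimensional space ordered by a cone with nonempty interior, I would invoke (or, if not yet available in the literature at the level of generality needed, prove by a Laplace-transform / resolvent argument) the following: the resolvent $(\lambda - B)^{-1}$, for real $\lambda > \spb(B)$, is eventually given by the absolutely convergent Laplace integral $\int_0^\infty e^{-\lambda t} e^{tB}\,dt$; testing against $\varphi \in \calL(X)'_+$ and a nonnegative vector shows the tail of this integral pairs nonnegatively, which forces, via a Pringsheim-type argument on the scalar functions $\lambda \mapsto \langle \varphi, (\lambda - B)^{-1} M\rangle$, that the pole of $(\lambda - B)^{-1}$ of largest real part is real, i.e.\ $\spb(B) \in \spec(B)$; a compactness/limiting argument along $\lambda \downarrow \spb(B)$ on the normalized vectors $(\lambda - B)^{-1} M / \norm{(\lambda - B)^{-1}M}$ (with $M$ in the interior of $\calL(X)_+$) produces an eigenvector of $B$ in $\calL(X)_+ \setminus \{0\}$ for $\spb(B)$, and the same argument applied to the dual semigroup $(e^{tB'})_{t\ge 0}$ — which is weakly eventually nonnegative by Proposition~\ref{prop:dualize}\eqref{prop:dualize:itm:weak} — gives the dual eigenvector in $\calL(X)'_+$.

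Finally I would transfer everything back to $X$. For part~\eqref{thm:spectral-bound:itm:eigenvalue} this is immediate since $\spec(B) = \spec(A)$. For part~\eqref{thm:spectral-bound:itm:eigenvector}: an eigenvector $M \in \calL(X)_+ \setminus\{0\}$ of $B$ for $\spb(A)$ satisfies $AM = \spb(A) M$; pick any $y \in X$ with $My \ne 0$ — then $x := My \in \calL(X)_+ y$, but to conclude $x \in X_+$ I would choose $y \in \topInt{X_+}$, so that, since $M \ne 0$ maps $X_+$ into $X_+$ and is nonzero, $x = My$ lies in $X_+$ and, after possibly varying $y$ over a neighborhood, is nonzero; then $Ax = AMy = \spb(A)My = \spb(A)x$, giving the desired nonnegative eigenvector. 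For part~\eqref{thm:spectral-bound:itm:dual-eigenvector} one argues symmetrically using the dual statement (or applies the already-proven part~\eqref{thm:spectral-bound:itm:eigenvector} to the dual semigroup, which is legitimate by Proposition~\ref{prop:dualize}\eqref{prop:dualize:itm:weak}, noting $\spb(A') = \spb(A)$).

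The main obstacle I anticipate is the heart of the second paragraph: proving the Pringsheim-type statement that $\spb(A)$ is actually attained in the spectrum under only the \emph{weak} eventual nonnegativity hypothesis. Uniform eventual nonnegativity would let one directly use an eventual cone-invariance of $e^{tB}$ and quote classical Krein--Rutman; with the weak notion one only controls scalar quantities $\langle \varphi, e^{tB}M\rangle$ for large $t$, so the argument must be run componentwise through the Laplace transform and one must be careful that "for all large $t$" (with $t_0$ depending on $M$ and $\varphi$) still suffices — it does, because subtracting a fixed finite-horizon integral does not change the location of poles, but making this rigorous, and handling the case where $\spb(A)$ could a priori fail to be a pole of the resolvent (it cannot, but this needs the argument), is the delicate point. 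A secondary technical care point is the nonzeroness of the transferred eigenvector $x = My$ in part~\eqref{thm:spectral-bound:itm:eigenvector}, which is why choosing $y$ in the interior of $X_+$ (rather than an arbitrary $y$) matters.
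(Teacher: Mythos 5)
Your overall engine --- the Laplace-transform representation of the resolvent, a Pringsheim-type argument to force $\spb(A)$ into the spectrum, and dualization via Proposition~\ref{prop:dualize}\eqref{prop:dualize:itm:weak} for part~(c) --- is in spirit the same as the paper's proof, which however works directly on $X$. Two points in your plan are genuine gaps rather than routine details. First, the detour through $\calL(X)$ with $B(M):=AM$ is circular as a reduction: the statement you need on $(\calL(X),\calL(X)_+)$ is exactly the theorem itself, applied to another space, so the reduction buys nothing and you still have to prove the hard part from scratch; you could (and should) run the resolvent argument directly on $X$. Second, and more seriously, your construction of the nonnegative eigenvector from the normalized vectors $(\lambda-B)^{-1}M/\norm{(\lambda-B)^{-1}M}$ with $M\in\topInt{\calL(X)_+}$ presupposes that $\norm{(\lambda-B)^{-1}M}\to\infty$ as $\lambda\downarrow\spb(B)$. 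Under mere weak eventual nonnegativity this is not automatic: the resolvent is not a positive operator, so a fixed interior point of the cone could a priori be annihilated by the singular part of the Laurent expansion at $\spb(B)$, and order-unit comparisons do not help because $(\lambda-B)^{-1}$ does not respect the order. Justifying the blow-up along a cone direction is precisely the crux, and the paper handles it by showing that the leading Laurent coefficient $Q=\lim_{r\downarrow 0}r^k(r-A)^{-1}$ is a nonnegative and nonzero operator (pairing with $x\in X_+$, $x'\in X'_+$ through the Laplace transform; the pair-dependent $t_0$ is harmless because the finite-horizon integral stays bounded), after which $Qx\in X_+\setminus\{0\}$ is the desired eigenvector for a suitable $x\in X_+$. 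Your step also needs, but does not supply, the argument that the limit vector lies in the cone, which again uses the blow-up to kill the non-signed initial segment $\int_0^{t_0}$ for each functional separately.

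A related point affects part~(a): Pringsheim--Landau applies to each scalar function $\lambda\mapsto\langle x',(\lambda-A)^{-1}x\rangle$ separately, so to conclude that $\spb(A)\in\spec(A)$ you must also argue that some pair $(x,x')$ with $x\in X_+$, $x'\in X'_+$ actually detects a spectral value on the line $\re\lambda=\spb(A)$; this uses that $X_+$ and $X'_+$ are generating. The paper makes exactly this choice explicit (a peripheral eigenvector is a complex combination of cone vectors, and likewise on the dual side) and then transfers the resolvent blow-up from the peripheral point to the real axis by an elementary estimate, rather than quoting Pringsheim. Once you supply the spanning argument for (a) and the nonnegativity/nonvanishing of the Laurent coefficient for (b), your plan collapses to essentially the paper's proof on $X$; the transfer of an eigenvector back from $\calL(X)$ (choosing $y\in\topInt{X_+}$ so that $My\neq 0$) is correct but then superfluous.
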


Let us recall the important observation that, even in the case of nonnegative semigroups,
one cannot expect the spectral bound $\spb(A)$ to be a \emph{dominant} eigenvalue in the sense that
every other eigenvalue has strictly smaller real part
(for instance, a rotation semigroup on the space $\bbR^3$, endowed with the ice cream cone, gives a counterexample).
For more information on this phenomenon and on the question how it is related to the geometry of the cone,
we refer to \cite{SchneiderVidyasagar1970} and~\cite{Veitsblit1985, VeitsblitLyubich1985}.

We approach the proof of Theorem~\ref{thm:spectral-bound}
from a rather functional analytic perspective
and make heavy use of resolvent operators.
The main ideas are quite similar to arguments used in an (infinite-dimensional)
Banach lattice setting in~\cite[Sections~4--6]{Glueck2017}.

\begin{proof}[Proof of Theorem~\ref{thm:spectral-bound}]
	Throughout the proof we may assume that $X = \bbR^d$ for some integer $d \ge 1$
	and that this space is endowed with the Euclidean norm.%
	\footnote{
		Working on $\bbR^d$ might be a bit more intuitive in this particular proof, 
		since a complexification of $\bbR^d$ is concretely given as $\bbC^d$ --
		this makes the arguments which rely on the complexification a bit less abstract.
	}
	Moreover, by replacing $A$ with $A-\spb(A)$ we may, and shall, 
	assume throughout the proof that $\spb(A) = 0$.
	
	\eqref{thm:spectral-bound:itm:eigenvalue}
	By definition of the spectral bound, there exists a complex number $\lambda$ with real part $\re \lambda = \spb(A) = 0$
	and a non-zero vector $z \in \bbC^d$ such that $Az = \lambda z$.
	Our main line of argument will be to show that, as the resolvent of $A$ blows up at $\lambda$,
	it also has to blow up at $\spb(A)$ due to the weak eventual nonnegativity.
	More precisely, we argue as follows:

	Since the cone $X_+$ is generating in $X = \bbR^d$, 
	the vector $z$ is a complex linear combination of four vectors in $X_+$.
	Moreover, the norm of the vector
	\begin{align*}
		(\mu - A)^{-1} z = \frac{1}{\mu - \lambda} z
	\end{align*}
	explodes as $\mu \in \bbC \setminus \spec(A)$ approaches $\lambda$;
	thus we can find a vector $x \in X_+$ and a real sequence $r_n \downarrow 0$ such that
	\begin{align*}
		\norm{(\lambda + r_n - A)^{-1} x} \to \infty
		\qquad \text{as } n \to \infty.
	\end{align*}
	Next we use that the dual cone $X'_+$ spans the dual space $X' \simeq \bbR^d$
	and hence it also spans $\bbC^d$ when we allow for complex scalars.
	So we can find a functional $x' \in X'_+$ and replace $(r_n)_{n \in \bbN}$ with an appropriate subsequence
	such that
	\begin{align}
		\label{eq:thm:spectral-bound:resolvent-blow-up}
		\modulus{ \langle x', (\lambda + r_n - A)^{-1} x \rangle } \to \infty
		\qquad \text{as } n \to \infty.
	\end{align}
	For the rest of the argument we use the Laplace transform representation of the resolvent of $A$:
	for each $\nu \in \bbC$ of real part $\re \nu > 0 = \spb(A)$ we have
	\begin{align*}
		(\nu - A)^{-1} = \int_0^\infty e^{-t\nu} e^{tA} \dx t.
	\end{align*}
	By the weak eventual nonnegativity of $(e^{tA})_{t \ge 0}$ there exists a time $t_0 \ge 0$ such that
	$\langle x', e^{tA} x \rangle \ge 0$ for all $t \ge t_0$.
	Using the notation $E(\nu) := \int_0^{t_0} e^{-t\nu} e^{tA} \dx t$ for each $\nu \in \bbC$ 
	we obtain for every $n \in \bbN$ 
	\begin{align*}
		 & \modulus{ \langle x', (\lambda + r_n - A)^{-1} x \rangle }
		\le
		\int_{t_0}^\infty e^{-t r_n} \langle x', e^{tA} x \rangle \dx t
		+ \modulus{\langle x', E(\lambda + r_n) x \rangle }
		\\
		 & \quad
		\le
		\modulus{ \langle x', (r_n - A)^{-1} x \rangle }
		+
		\modulus{\langle x', E(r_n) x \rangle }
		+
		\modulus{\langle x', E(\lambda + r_n) x \rangle }
	\end{align*}
	The latter two summands remain bounded as $n \to \infty$,
	so we conclude from the resolvent blow-up in formula~\eqref{eq:thm:spectral-bound:resolvent-blow-up}
	that also
	\begin{align*}
		\modulus{ \langle x', (r_n - A)^{-1} x \rangle } \to \infty
		\qquad \text{as } n \to \infty.
	\end{align*}
	Hence, $\norm{(r_n - A)^{-1}} \to \infty$ as $n \to \infty$,
	which proves that $0$ is in the spectrum of $A$.

	\eqref{thm:spectral-bound:itm:eigenvector}
	Due to~\eqref{thm:spectral-bound:itm:eigenvalue} the resolvent $(\argument - A)^{-1}$ has a pole at $0$;
	let $k \ge 1$ denote its order.
	Then $r^k (r - A)^{-1}$ converges to a non-zero operator $Q$ on $X = \bbR^d$ as $r \downarrow 0$ 
	and every non-zero vector in the range of $Q$ is an eigenvector of $A$ for the eigenvalue $0$
	(this follows, for instance, from \cite[Section~VIII.8, Theorem~2 on p.\;229]{Yosida1980}).

	Let us now show that the operator $Q$ is nonnegative;
	to this end, let $x \in X_+$ and $x' \in X'_+$.
	Due to the weak eventual nonnegativity of the semigroup we find a time $t_0 \ge 0$ such that
	$\langle x', e^{tA} x \rangle \ge 0$ for all $t \ge t_0$.
	Hence, we have
	\begin{align*}
		 &
		\langle x', Qx \rangle
		=
		\lim_{r \downarrow 0}
		r^k \langle x', (r-A)^{-1} x \rangle
		\\
		 & =
		\lim_{r \downarrow 0}
		\Big(
		\underbrace{
				r^k \int_0^{t_0} e^{-tr} \langle x', e^{tA} x \rangle \dx t
			}_{\to 0 \text{ as } r \downarrow 0)}
		+
		\underbrace{r^k}_{\ge 0}
		\int_{t_0}^\infty
		\underbrace{ e^{-tr} \langle x', e^{tA} x \rangle }_{\ge 0}
		\dx t
		\Big),
	\end{align*}
	so $\langle x', Qx \rangle \ge 0$ and thus $Q \ge 0$.

	As $Q$ is non-zero and $X_+$ spans $X$, there exists a vector $x \in X_+$ such that $Qx \not= 0$.
	Hence, $Qx \in X_+$ is an eigenvector of $A$ for the eigenvalue $0$.

	\eqref{thm:spectral-bound:itm:dual-eigenvector}
	By Proposition~\ref{prop:dualize}\eqref{prop:dualize:itm:weak} the dual semigroup $(e^{tA'})_{t \geq 0}$ 
	is weakly eventually nonnegative with respect to the dual cone $X_+'$. 
	Moreover, as discussed at the beginning of Section~\ref{sec:cones-finite-dim-spaces-operator-spaces}, 
	the dual cone $X_+'$ is closed and has non-empty interior. 
	As $A$ and $A'$ have the same eigenvalues one has $\spb(A) = \spb(A')$, 
	so~\eqref{thm:spectral-bound:itm:eigenvalue} and~\eqref{thm:spectral-bound:itm:eigenvector} yield the assertion.
\end{proof}

\section{Eventual positivity}
\label{sec:eventual-positivity}

In this final section we show that eventual positivity behaves much simpler than eventual nonnegativity
in the sense that all three notions (weak, individual, and uniform) are equivalent.
Moreover, the following theorem also shows that these properties can be characterized in purely spectral theoretic terms.
Recall that we call the spectral bound of an operator $A \in \calL(X)$ on a finite-dimensional vector space $X$
a \emph{dominant eigenvalue} of $A$ if it is an eigenvalue of $A$ and every other eigenvalue has strictly smaller real part.
As in the previous section, all spectral theoretic notions are to be understood by considering a complexification of $X$.

\begin{theorem}\label{thm:strong-equiv}
	Let $\{0\} \not= X$ be a finite-dimensional real vector space, ordered by a closed cone $X_+$ with non-empty interior.
	For each $A \in \calL(X)$ the following assertions are equivalent:
	\begin{enumerate}[\upshape (i)]
		\item\label{thm:strong-equiv:itm:unif}
		The semigroup $(e^{tA})_{t \ge 0}$ is uniformly eventually positive.

		\item\label{thm:strong-equiv:itm:ind}
		The semigroup $(e^{tA})_{t \ge 0}$ is individually eventually positive.

		\item\label{thm:strong-equiv:itm:weak}
		The semigroup $(e^{tA})_{t \ge 0}$ is weakly eventually positive.

		\item\label{thm:strong-equiv:itm:geom}
		The spectral bound $\spb(A)$ is a dominant and geometrically simple eigenvalue of $A$;
		the eigenspace $\ker(\spb(A) - A)$ is spanned by an interior point of $X_+$
		and the dual eigenspace $\ker(\spb(A) - A')$ is spanned by an interior point of $X'_+$.

		\item\label{thm:strong-equiv:itm:alg}
		Assertion~\eqref{thm:strong-equiv:itm:geom} holds
		and the eigenvalue $\spb(A)$ of $A$ is even algebraically simple.
	\end{enumerate}
\end{theorem}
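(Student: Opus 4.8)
The plan is to prove the cyclic chain \eqref{thm:strong-equiv:itm:unif} $\Rightarrow$ \eqref{thm:strong-equiv:itm:ind} $\Rightarrow$ \eqref{thm:strong-equiv:itm:weak} $\Rightarrow$ \eqref{thm:strong-equiv:itm:geom} $\Rightarrow$ \eqref{thm:strong-equiv:itm:alg} $\Rightarrow$ \eqref{thm:strong-equiv:itm:unif}. The first two implications and \eqref{thm:strong-equiv:itm:alg} $\Rightarrow$ \eqref{thm:strong-equiv:itm:geom} are immediate (a uniform $t_0$ works for each fixed $x$, and a nonzero functional in $X'_+$ is strictly positive on $\topInt{X_+}$), so the content is in the three remaining implications. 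Throughout I would reduce to $\spb(A) = 0$ by replacing $A$ with $A - \spb(A)$, which only rescales $e^{tA}$ by the positive scalar $e^{-t\spb(A)}$ and hence changes none of the five properties.

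For \eqref{thm:strong-equiv:itm:weak} $\Rightarrow$ \eqref{thm:strong-equiv:itm:geom} I would proceed as follows. Weak eventual positivity implies weak eventual nonnegativity, so Theorem~\ref{thm:spectral-bound} yields an eigenvector $x \in X_+$ of $A$ and an eigenvector $x' \in X'_+$ of $A'$, both for the eigenvalue $0$. First upgrade these to interior points: since $e^{tA}x = x$, weak eventual positivity gives $\langle y', x\rangle = \langle y', e^{tA}x\rangle > 0$ for every $0 \neq y' \in X'_+$ and large $t$, and a point of $X_+$ that is strictly positive against every nonzero dual functional lies in $\topInt{X_+}$ by the supporting hyperplane theorem; symmetrically $x' \in \topInt{X'_+}$. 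The crucial mechanism is then: if $0 \neq y \in \partial X_+$ satisfies $e^{t_n A}y = y$ along some sequence $t_n \to \infty$, then a supporting functional $0 \neq z' \in X'_+$ at $y$ gives $\langle z', e^{t_n A}y \rangle = \langle z', y\rangle = 0$, contradicting weak eventual positivity. I would apply this twice. \emph{Geometric simplicity}: if $\dim\ker A \ge 2$, then $\ker A$ is a subspace of dimension $\ge 2$ meeting $\topInt{X_+}$ (it contains $x$) but not contained in $X_+$, so the cone $\ker A \cap X_+$ has a nonzero relative boundary point $y \in \partial X_+$, fixed by all $e^{tA}$ — contradiction; hence $\ker A = \linSpan\{x\}$ and likewise $\ker A' = \linSpan\{x'\}$. \emph{Dominance}: suppose $i\beta \in \spec(A)$ with $\beta \neq 0$ and pick an eigenvector $u + iv$ with $u, v \in \bbR^d$, which are linearly independent and span an $e^{tA}$-invariant plane on which $e^{tA}$ acts as the rotation of angle $\beta t$; then $W := \linSpan\{x,u,v\}$ is three-dimensional, $e^{tA}$-invariant, with $e^{(2\pi/\beta)A}|_W = \id_W$, meets $\topInt{X_+}$ but is not contained in $X_+$, hence $W \cap X_+$ has a nonzero boundary point $y \in \partial X_+$ with $e^{(2\pi n/\beta)A}y = y$ for all $n$ — contradiction. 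So every eigenvalue other than $0$ has real part $< 0$.

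For \eqref{thm:strong-equiv:itm:geom} $\Rightarrow$ \eqref{thm:strong-equiv:itm:alg}: with $x$ spanning $\ker A$ and $x'$ spanning $\ker A'$ (and $\langle x', x\rangle > 0$ since $x \in \topInt{X_+}$ and $0 \neq x' \in X'_+$), if $0$ were not algebraically simple then, being geometrically simple, there is a Jordan chain of length $\ge 2$, i.e.\ a vector $v$ with $Av = \gamma x$, $\gamma \neq 0$; but $\gamma\langle x', x\rangle = \langle x', Av\rangle = \langle A'x', v\rangle = 0$ is a contradiction. For \eqref{thm:strong-equiv:itm:alg} $\Rightarrow$ \eqref{thm:strong-equiv:itm:unif}: normalize $\langle x', x\rangle = 1$; by algebraic simplicity the spectral projection of $A$ for the eigenvalue $0$ is the rank-one operator $P = x \otimes x'$ (one checks this using $Ax = 0$ and $A'x' = 0$ to pin down its range and kernel), and $e^{tA}P = P$, while by dominance $A|_{(\id-P)X}$ has spectral bound $< 0$, so $\norm{e^{tA}(\id - P)} \to 0$ and thus $e^{tA} = P + e^{tA}(\id - P) \to P$ in $\calL(X)$. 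By the last statement of Proposition~\ref{prop:operator-cone-interior}, $P = x \otimes x' \in \topInt{\calL(X)_+}$, so $e^{tA} \in \topInt{\calL(X)_+}$ for all large $t$, which by the characterization in the same proposition means $e^{tA}(X_+\setminus\{0\}) \subseteq \topInt{X_+}$ for all such $t$ — uniform eventual positivity.

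The main obstacle is the dominance part of \eqref{thm:strong-equiv:itm:weak} $\Rightarrow$ \eqref{thm:strong-equiv:itm:geom}: one must turn a hypothetical purely imaginary eigenvalue into a nonzero boundary vector of $X_+$ whose orbit periodically returns to it. The key point there is that the rotation plane $\linSpan\{u,v\}$ alone need not meet $X_+$ at all, so it has to be enlarged by the interior eigenvector $x$ to obtain an invariant subspace that genuinely meets $\topInt{X_+}$ and therefore has a nontrivial boundary on which the returning-orbit contradiction can be run; the rest of the proof is comparatively routine.
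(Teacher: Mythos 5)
Your proposal is correct and takes essentially the same route as the paper's proof: \eqref{thm:strong-equiv:itm:unif}$\Rightarrow$\eqref{thm:strong-equiv:itm:ind}$\Rightarrow$\eqref{thm:strong-equiv:itm:weak} trivially, \eqref{thm:strong-equiv:itm:weak}$\Rightarrow$\eqref{thm:strong-equiv:itm:geom} via Theorem~\ref{thm:spectral-bound} together with supporting functionals at non-zero boundary points of $X_+$ that are fixed (or periodically revisited) by the semigroup, \eqref{thm:strong-equiv:itm:geom}$\Rightarrow$\eqref{thm:strong-equiv:itm:alg} by the same Jordan-chain/duality-pairing computation, and \eqref{thm:strong-equiv:itm:alg}$\Rightarrow$\eqref{thm:strong-equiv:itm:unif} by convergence of $e^{tA}$ to the rank-one projection $x \otimes x'$ combined with Proposition~\ref{prop:operator-cone-interior}. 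The only cosmetic difference is that in \eqref{thm:strong-equiv:itm:weak}$\Rightarrow$\eqref{thm:strong-equiv:itm:geom} you produce the boundary point inside an invariant subspace ($\ker A$, respectively $\linSpan\{x,u,v\}$) instead of on the line $x_0 - \alpha z$ through the interior eigenvector as in the paper, which is the same underlying pointedness argument.
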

\begin{proof}
	There is no loss of generality in assuming that $\spb(A) = 0$.

	``\eqref{thm:strong-equiv:itm:unif}~$\Rightarrow$~\eqref{thm:strong-equiv:itm:ind}''
	This implication is obvious.

	``\eqref{thm:strong-equiv:itm:ind}~$\Rightarrow$~\eqref{thm:strong-equiv:itm:weak}''
	This implication is also immediate, as $\langle x', x \rangle > 0$ for each $x \in \topInt{X_+}$
	and each non-zero $x' \in X'_+$.

	``\eqref{thm:strong-equiv:itm:weak}~$\Rightarrow$~\eqref{thm:strong-equiv:itm:geom}''
	According to Theorem~\ref{thm:spectral-bound} the spectral bound $\spb(A) = 0$ is an eigenvalue of $A$
	with an eigenvector $x_0 \in X_+$;
	similarly, $A'$ has an eigenvector $x_0' \in X'_+$ for the eigenvalue $0$.

	Next we show that every eigenvector $x \in X_+ \cap \ker A$ is an interior point of $X_+$.
	Indeed for such an $x$ and for every non-zero functional $y' \in X'_+$ one has
	\begin{align*}
		\langle y', x \rangle = \langle y', e^{tA} x \rangle > 0
	\end{align*}
	for all sufficiently large times $t$;
	hence, $x$ is indeed an interior point of $X_+$.
	By the same reasoning one can see that every dual eigenvector $x' \in X'_+ \cap \ker A'$
	is an interior point of $X'_+$.
	In particular, it follows that $x_0 \in \topInt{X_+}$ and $x_0' \in \topInt{X_+'}$.

	Now we show that $\ker A$ is one-dimensional.
	To this end, let $0 \not= y \in \ker A$.
	Since $X_+$ does not contain an affine subspace of non-zero dimension,
	there exists a number $\alpha \in \bbR$ such that $x_0 - \alpha y$ is in the topological boundary of $X_+$.
	But since $x_0 - \alpha y \in X_+ \cap \ker A$, this vector is, as shown above,
	either an interior point of $X_+$ or $0$.
	Hence, $x_0 - \alpha y = 0$, which proves that $\ker A$ is indeed one-dimensional.
	The same argument in $X'_+$ shows that $\ker A'$ is also one-dimensional.

	Finally we show that $\spb(A) = 0$ is a dominant spectral value of $A$;
	assume the contrary.
	Then $A$ has an eigenvalue $i \tau \in i\bbR \setminus \{0\}$ with $\tau > 0$,
	so there exists a point $0 \not= z \in X$
	which has a periodic orbit under $(e^{tA})_{t \ge 0}$ with minimal period $\tau > 0$
	(namely, take $z$ to be the real part of an eigenvector of $A$ for the eigenvalue $i\tau$).
	Again as $X_+$ does not contain an affine subspace of non-zero dimension, we can find a number $\alpha \in \bbR$
	such that $x_0 - \alpha z$ is in the topological boundary of $X_+$.
	Hence, there exists a non-zero functional $y' \in X'_+$ such that $\langle y', x_0 - \alpha z \rangle = 0$.
	However, the vector $x_0 - \alpha z$ is non-zero
	(as the minimal period $\tau$ of $z$ is non-zero, while $x_0$ is a fixed point of the semigroup).
	So due to the weak eventual positivity of the semigroup, there exists an integer $n \ge 0$ such that
	\begin{align*}
		0 < \langle y', e^{n\tau A}(x_0 - \alpha z) \rangle = \langle y', x_0 - \alpha z\rangle = 0,
	\end{align*}
	which is a contradiction.

	``\eqref{thm:strong-equiv:itm:geom} $\Rightarrow$ \eqref{thm:strong-equiv:itm:alg}''
	Assume towards a contradiction that the eigenvalue $\spb(A) = 0$ of $A$ is not algebraically simple.%
	\footnote{
		Our argument for algebraic simplicity is loosely inspired
		by an argument in the proof of \cite[Theorem~7]{Cairns2021}.
	}
	Then there exists a vector $x \in X$ such that $A^2 x = 0$ but $Ax \not= 0$.

	According to~\eqref{thm:strong-equiv:itm:geom} $\ker A$ is spanned by an interior point $x_0$ of $X_+$
	and there exists a non-zero functional $x_0' \in \ker(A') \cap X'_+$.
	The vector $Ax$ is a non-zero multiple of $x_0$, so by replacing $x$ with a non-zero multiple, 
	we may assume that $Ax = x_0$.
	Thus we have
	\begin{align*}
		0
		=
		\langle x, A' x_0' \rangle
		=
		\langle A x, x_0' \rangle
		=
		\langle x_0, x_0' \rangle
		>
		0,
	\end{align*}
	which is a contradiction.

	``\eqref{thm:strong-equiv:itm:alg} $\Rightarrow$ \eqref{thm:strong-equiv:itm:unif}''
	One can derive this implication from \cite[Theorem~8~(i)$\Rightarrow$(iv)]{KasigwaTsatsomeros2017},
	but it is also straightforward to give a direct proof instead:

	Let $x_0 \in \ker A$ and $x_0' \in \ker(A')$ be interior points of $X_+$ and $X'_+$, respectively.
	By multiplying one of these vector with a positive scalar we can achieve that $\langle x_0', x_0 \rangle =1$
	and thus the rank-$1$ operator $x_0 \otimes x_0'$ is a projection.
	The properties listed in~\eqref{thm:strong-equiv:itm:alg} and the assumption $\spb(A) = 0$ imply that $e^{tA}$ converges to $x_0 \otimes x_0'$ as $t \to \infty$.
	According to Proposition~\ref{prop:operator-cone-interior} the operator $x_0 \otimes x_0'$
	is an interior point of the cone $\calL(X)_+$.
	Thus, there exists $t_0 \ge 0$ such that $e^{tA}$ is an interior point of $\calL(X)_+$ for each $t \ge t_0$.
	By Proposition~\ref{prop:operator-cone-interior} this means that $e^{tA}$ maps $X_+ \setminus \{0\}$ into $\topInt{X_+}$
	for each $t \ge t_0$, so the semigroup is indeed uniformly eventually positive.
\end{proof}

\appendix

\section{A few notes on the closedness of cones}\label{sec:more-on-closedness-of-cones}

In this appendix we discuss the limitations of Lemma~\ref{lem:closed-convexification}.
We first note that the assumption $\conv(E) \cap -E  = \{0\}$ in Lemma~\ref{lem:closed-convexification} cannot be dropped; 
the following counterexample is essentially taken from~\cite{DeCorte2014}. 
We include it here to be more self-contained, 
in particular since the example serves as a blueprint for the slightly more involved 
Example~\ref{exa:non_convex_cone_4d} below.

\begin{example}\label{exa:non_convex_cone_3d}
	In $\bbR^3$, consider the closed cones
	\begin{align*}
		E_1 
		& := 
		\left\{ 
			x
			\in \bbR^3 
			\mid 
			x_1^2 + x_2^2 \leq x_3^2 \text{ and } x_3 \geq 0 
		\right\}, 
		\\
		E_2 
		& := 
		\left\{ 
			\lambda 
			\begin{psmallmatrix}
				-1 \\ \phantom{-}0 \\ -1
			\end{psmallmatrix}
			\mid 
			\lambda \in [0,\infty) 
		\right\}
	\end{align*}
	and define $E := E_1 \cup E_2$. 
	Then $E$ is closed and one has $\lambda E \subseteq E$ for all $\lambda \geq 0$. 
	Moreover, $\conv(E) = E_1 + E_2$.
	
	One can readily check that the vector $(0, 1, 0)$ is not contained in $\conv(E)$.
	However, this vector is in the closure of $\conv(E)$ since
	\begin{align*}
		\begin{psmallmatrix}
			0 \\ 1 \\ 0
		\end{psmallmatrix}
		= 
		\lim_{\lambda \to \infty} 
		\left[
			\begin{psmallmatrix}
				-\lambda \\ \phantom{-}0 \\ -\lambda
			\end{psmallmatrix}
			+ 
			\begin{psmallmatrix}
				\lambda       \\ 
				1 + \frac{1}{\lambda} \\ 
				\sqrt{\lambda^2 + (1 + \frac{1}{\lambda})^2}
			\end{psmallmatrix}
		\right]
		.
	\end{align*}
	The only assumption of Lemma~\ref{lem:closed-convexification} that is violated 
	is $\conv(E) \cap -E  = \{0\}$. 
	Indeed, the vector $(-1,0,-1)$ is even contained in $E \cap -E$.
\end{example}

In $\bbR^4$ there is even an example of a dilation invariant closed set $E$ 
that satisfies $E \cap -E = \{0\}$ and has non-closed convex hull $\conv(E)$. 
Before we state the example we show that this cannot occur in three dimensions.

\begin{proposition}
	Let $E \subseteq \bbR^3$ be a closed set that satisfies $\lambda E \subseteq E$ for all real numbers $\lambda \geq 0$.
	Assume that $E \cap -E = \{0\}$.
	Then $\conv(E)$ is closed.
\end{proposition}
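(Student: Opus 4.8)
The plan is to describe $\conv(E)$ in terms of the directions occurring in $E$ and to distinguish cases according to the position of the origin relative to the set of these directions. Write $\Sigma := \{\, x/\norm{x} : 0 \neq x \in E \,\}$ for the set of unit vectors that are positive multiples of elements of $E$; since $E$ is closed and satisfies $[0,\infty)E \subseteq E$, the set $\Sigma$ is a compact subset of the Euclidean unit sphere, one has $E = \{0\} \cup \{\, t\omega : t > 0, \ \omega \in \Sigma \,\}$, and the assumption $E \cap -E = \{0\}$ is equivalent to $\Sigma \cap (-\Sigma) = \emptyset$. A direct check shows that $\conv(E)$ equals the convex cone $\{\, \sum_{i=1}^{k} t_i \omega_i : k \geq 0,\ t_i \geq 0,\ \omega_i \in \Sigma \,\}$ generated by $\Sigma$; in particular $\conv(E)$ is stable under addition, a fact we will use repeatedly. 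I would then treat two cases.

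\emph{First case: $0 \notin \conv(\Sigma)$.} Here $\conv(\Sigma)$ is a compact convex set not containing the origin, so there is a vector $v \neq 0$ and an $\varepsilon > 0$ with $\langle v, \omega \rangle \geq \varepsilon$ for all $\omega \in \Sigma$. Consequently $\conv(E) \subseteq \{\, x : \langle v, x \rangle \geq 0 \,\}$, and from this one reads off that $\conv(E) \cap (-E) = \{0\}$: if $0 \neq y \in \conv(E)$ and $-y \in E$, then $-y = t\omega$ with $t > 0$, $\omega \in \Sigma$, and $\langle v, y \rangle \geq 0$ forces $\langle v, \omega \rangle \leq 0$, a contradiction. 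Now Lemma~\ref{lem:closed-convexification} (in its more general form) applies and yields that $\conv(E)$ is closed.

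\emph{Second case: $0 \in \conv(\Sigma)$.} If $\Sigma$ is not contained in any closed half-space $\{\, x : \langle v, x \rangle \geq 0 \,\}$ with $v \neq 0$, then a standard supporting-hyperplane argument (using $0 \in \conv(\Sigma)$) shows $0 \in \topInt{\conv(\Sigma)}$, so the convex cone $\conv(E) \supseteq \conv(\Sigma)$ contains a neighbourhood of the origin and hence equals $\bbR^3$, which is closed. Otherwise fix $v \neq 0$ with $\Sigma \subseteq H := \{\, x : \langle v, x \rangle \geq 0 \,\}$ and set $P := v^{\perp}$, a two-dimensional subspace. Write $0 = \sum_{i=1}^{m} \lambda_i \omega_i$ with pairwise distinct $\omega_i \in \Sigma$, $\lambda_i > 0$, $\sum_i \lambda_i = 1$ (necessarily $m \geq 2$). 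Since $\langle v, \cdot \rangle \geq 0$ on $\Sigma$ while $\sum_i \lambda_i \langle v, \omega_i \rangle = 0$, every $\omega_i$ lies in $P$. These $\omega_i$ cannot all lie in a closed half-plane of $P$: otherwise the same argument inside $P$ would force them onto a line, so two of them would be antipodal, contradicting $\Sigma \cap (-\Sigma) = \emptyset$. Hence $\{\omega_1, \dots, \omega_m\}$ has conical hull all of $P$, and therefore $P \subseteq \conv(E)$.

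To finish the second case, distinguish whether $E \subseteq P$. If $E \subseteq P$, then $\conv(E) \subseteq P$, which together with $P \subseteq \conv(E)$ gives $\conv(E) = P$, closed. If $E \not\subseteq P$, choose $e \in E$ with $\langle v, e \rangle > 0$ (such $e$ exists since $E \subseteq H$ and $E \not\subseteq P$); then for an arbitrary $x \in H$ one has $x = \tfrac{\langle v, x \rangle}{\langle v, e \rangle}\, e + p$ with $p := x - \tfrac{\langle v, x \rangle}{\langle v, e \rangle}\, e \in P$, and both summands lie in the convex cone $\conv(E)$, so $x \in \conv(E)$; thus $\conv(E) = H$, which is closed. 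I expect the main obstacle to be the bookkeeping of this second case — in particular, recognising that once $0 \in \conv(\Sigma)$ and $\Sigma$ is ``one-sided'', the set $\conv(E)$ is forced to be a half-space (or all of $\bbR^3$), and that it genuinely recovers the \emph{entire} half-space rather than just an open one. This is precisely where $\dim X = 3$ enters: the complement $P$ of $v$ is two-dimensional, so a single additional direction off $P$ already fills out $H$. In $\bbR^4$ the complement is three-dimensional and this mechanism fails, in line with Example~\ref{exa:non_convex_cone_4d} below.
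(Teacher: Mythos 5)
Your proof is correct, but it takes a genuinely different route from the paper. The paper does not invoke Lemma~\ref{lem:closed-convexification} as a black box (its hypothesis $\conv(E)\cap -E=\{0\}$ is not available); instead it re-runs that lemma's proof: using the conical refinement of Caratheodory's theorem it lifts a convergent sequence of $\conv(E)$ to $E^3$, handles the bounded case by compactness, and in the unbounded case extracts nonzero $y_1,y_2,y_3\in E$ with $y_1+y_2+y_3=0$ (pointedness of $E$ forcing all three to be nonzero), so that $\conv(E)$ contains the two-dimensional subspace they span; a wedge in $\bbR^3$ containing a $2$-plane is then automatically $W$, a half-space, or $\bbR^3$, hence closed. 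You instead make an a priori dichotomy on the compact direction set $\Sigma = \{x/\norm{x}: 0\neq x\in E\}$: if $0\notin\conv(\Sigma)$, strict separation yields exactly the weaker hypothesis $\conv(E)\cap -E=\{0\}$ of Lemma~\ref{lem:closed-convexification}, which you then apply as stated; if $0\in\conv(\Sigma)$, pointedness forces the degenerate directions into the plane $P=v^\perp$ and not into any half-plane of $P$, and you identify $\conv(E)$ explicitly as $P$, the closed half-space $H$, or $\bbR^3$. What the paper's route buys is brevity (one sequence argument, no separation-theorem case analysis); what yours buys is a cleaner logical structure (the lemma is used only through its statement) and an explicit description of $\conv(E)$ in the non-pointed case, which makes visible exactly where $\dim X=3$ enters — the complement of the separating functional is two-dimensional, so one extra direction already fills the half-space — in agreement with the failure in $\bbR^4$ exhibited in Example~\ref{exa:non_convex_cone_4d}. (Only cosmetic caveats: the degenerate cases $E=\emptyset$ and $E=\{0\}$ should be dispatched in a word before writing $E=\{0\}\cup\{t\omega: t>0,\ \omega\in\Sigma\}$, and the step ``$\Sigma$ in no closed half-space $\Rightarrow 0\in\topInt{\conv(\Sigma)}$'' deserves the one-line supporting-hyperplane justification you sketch.)
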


\begin{proof}
	We use the notation $X := \bbR^3$.
	As the convex hull $\conv(E)$ coincides with the convex cone generated by $E$, 
	a refinement of Caratheodory's theorem for cones, see~\cite[Corollary~IV.17.2 on p.\;156]{Rockafellar1997}, 
	says that each vector in $\conv(E)$ can be written as $y_1 + y_2 + y_3$ for vectors $y_1,y_2,y_2 \in E$,%
	\footnote{
		Recall that in the classical Caratheodory theorem 
		one needs a convex combination of $4 = \dim \bbR^3 + 1$ vectors from $E$.
	}
	i.e., the linear map 
	\begin{align*}
		T: \, X^3 \to X, 
		\quad 
		y \mapsto y_1 + y_2 + y_3
	\end{align*}
	maps $E^3$ surjectively to $\conv(E)$.
	Now let $(x^{(n)})_{n \in \bbN}$ be a sequence in $\conv(E)$ that converges to $x \in \bbR^3$ 
	and choose a sequence $(y^{(n)})_{n \in \bbN}$ in $E^3$ such that $Ty^{(n)} = x^{(n)}$ for each $n \in \bbN$.
	If $(y^{(n)})_{n \in \bbN}$ is bounded in $X^3$ 
	a compactness argument implies that $x \in \conv(E)$ (as in the proof of Lemma~\ref{lem:closed-convexification}).
	
	So assume now that $(y^{(n)})_{n \in \bbN}$ is unbounded. 
	We will show that this implies directly that $\conv(E)$ is closed.
	As in the proof of Lemma~\ref{lem:closed-convexification}, we 
	replace $(y^{(n)})_{n \in \bbN}$ -- and, accordingly, $(x^{(n)})_{n \in \bbN}$ -- 
	with a subsequence such that $0 < \norm{y^{(n)}} \to \infty$ 
	and such that $(y^{(n)} / \norm{y^{(n)}})_{n \in \bbN}$ converges 
	to a vector $y \in E^3$ of norm $1$.
	At least one of the components of $y$, say $y_1$, is a non-zero vector in $X$.
	
	We have $Ty = \lim_n x^{(n)} / \norm{y^{(n)}} = 0$, i.e., $y_1+y_2+y_3 = 0$.
	It follows that $y_2 \not= 0$, since otherwise $0 \not= y_1 = -y_3 \in E \cap -E$; 
	the same argument shows that $y_3 \not= 0$.
	So the vectors $y_1,y_2,y_3$ are all non-zero and sum up $0$, 
	which implies that the wedge $W := [0,\infty)y_1 + [0,\infty)y_2 + [0,\infty)y_3$ 
	spanned by them is a two dimensional vector subspace of $\bbR^3$. 
	
	So the wedge $\conv(E)$ in $\bbR^3$ contains the two dimensional vector subspace $W$ 
	and thus $\conv(E)$ can only be one of the following sets: 
	the set $W$, or a half space on one side of $W$ that contains $W$, or $\bbR^3$. 
	In each of those cases $\conv(E)$ is closed.
\end{proof}

The following modification of Example~\ref{exa:non_convex_cone_3d} 
yields a dilation invariant set $E \subseteq \bbR^4$ 
that satisfies $E \cap -E = \{0\}$ and has non-closed convex hull $\conv(E)$.

\begin{example}
	\label{exa:non_convex_cone_4d}
	In $\bbR^4$, consider the closed cones
	\begin{align*}
		E_1 
		& := 
		\left\{x \in \bbR^4 : \, x_1^2 + x_2^2 + x_3^2  \leq x_4^2 \text{ and } x_4 \geq 0 \right\}
		, 
		\\ 
		E_2 
		& := 
		\left\{ 
			\lambda 
			\begin{psmallmatrix}
				-1 \\ -1 \\ \phantom{-}0 \\ -1
			\end{psmallmatrix}
			\mid 
			\lambda \in [0,\infty) 
		\right\}
		,
		\\
		E_3 
		& := 
		\left\{ 
			\lambda 
			\begin{psmallmatrix}
				-1 \\ \phantom{-}1 \\ \phantom{-}0 \\ -1
			\end{psmallmatrix}
			\mid 
			\lambda \in [0,\infty) 
		\right\}
	\end{align*}
	and set
	\begin{align*}
		E := E_1 \cup E_2 \cup E_3.
	\end{align*}
	Then $E$ is closed and satisfies $\lambda E \subseteq E$ for all $\lambda \geq 0$ as well as $E \cap - E = \{0\}$. 
	Moreover, $\conv(E) = E_1 + E_2 + E_3$.
	
	The vector $(0, 0, 1, 0)$ is not contained in $\conv(E)$. 
	Indeed, if $x \in E_1$ and $\lambda_1, \lambda_2 \in [0,\infty)$ such that 
	\begin{align*}
		\begin{psmallmatrix}
			0 \\ 0 \\ 1 \\ 0
		\end{psmallmatrix}
		= 
		x 
		\; + \; 
		\lambda_1 
		\begin{psmallmatrix}
			-1 \\ -1 \\ \phantom{-}0 \\ -1
		\end{psmallmatrix}
		+ 
		\lambda_2 
		\begin{psmallmatrix}
			-1 \\ \phantom{-}1 \\ \phantom{-}0 \\ -1
		\end{psmallmatrix}
		,
	\end{align*}
	then $x_1 = x_4$ and $x_3 = 1$, which contradicts $x \in E_1$.
	However, the vector $(0,0,1,0)$ is in the closure of $\conv(E)$, since
	\begin{align*}
		\begin{psmallmatrix}
			0 \\ 0 \\ 1 \\ 0
		\end{psmallmatrix}
		= 
		\lim_{\lambda \to \infty}
		\left[ 
			\frac{\lambda}{2} 
			\begin{psmallmatrix}
				-1 \\ -1 \\ \phantom{-}0 \\ -1
			\end{psmallmatrix}
			+
			\frac{\lambda}{2} 
			\begin{psmallmatrix}
				-1 \\ \phantom{-}1 \\ \phantom{-}0 \\ -1
			\end{psmallmatrix}
			+
			\begin{psmallmatrix}
				\lambda     \\ 
				0           \\
				1 + \frac{1}{\lambda} \\ 
				\sqrt{\lambda^2 + \left( 1 + \frac{1}{\lambda} \right)^2}
			\end{psmallmatrix}
		\right] 
		.
	\end{align*}
\end{example}

\bibliographystyle{plain}
\bibliography{literature}

\end{document}